\newcommand{\R}{\mathbb{R}}
\newcommand{\Rd}{\R^d}
\newcommand{\inr}[1]{\left\langle #1 \right\rangle}
\newcommand{\E}{\mathbb{E}}
\newcommand{\eps}{\varepsilon}
\newtheorem{lemma}{Lemma}
\newtheorem{theorem}{Theorem}
\newtheorem{corollary}{Corollary}
\newtheorem{proposition}{Proposition}
\newtheorem{remark}{Remark}
\newtheorem{definition}{Definition}
\numberwithin{equation}{section}
\def \proof {\noindent {\bf Proof.}\ \ }
\def \remark {\noindent {\bf Remark.}\ \ }
\def \endproof
\def\IND{\mathbbm{1}}
\newcommand{\ol}{\overline}
\newcommand{\wt}{\widetilde}
\newcommand{\wh}{\widehat}
\newcommand{\X}{\mathcal{X}}
\newcommand{\EXP}{\mathbb{E}}
\newcommand{\PROB}{\mathbb{P}}
\newcommand{\var}{\mathrm{Var}}
\newcommand{\F}{{\mathcal F}}
\newcommand{\Tr}{\mathrm{Tr}}
\newcommand{\defeq}{\stackrel{\mathrm{def.}}{=}}
\begin{document}

\title{Multivariate mean estimation with direction-dependent accuracy
\thanks{
G\'abor Lugosi was supported by
the Spanish Ministry of Economy and Competitiveness,
Grant MTM2015-67304-P and FEDER, EU,
and by ``Google Focused Award Algorithms and Learning for AI''.
}
\thanks{Shahar Mendelson would like to thank Jungo Connectivity for its support.}
}
\author{
G\'abor Lugosi\thanks{Department of Economics and Business, Pompeu
  Fabra University, Barcelona, Spain, gabor.lugosi@upf.edu}
\thanks{ICREA, Pg. Lluís Companys 23, 08010 Barcelona, Spain}
\thanks{Barcelona Graduate School of Economics}
\and
Shahar Mendelson
\thanks{Mathematical Sciences Institute, The Australian National University,  shahar.mendelson@anu.edu.au}
}

\maketitle

\begin{abstract}
We consider the problem of estimating the mean of a random vector
based on $N$ independent, identically distributed observations.
We prove the existence of an estimator that has a near-optimal error in
all directions in which the variance of the one dimensional marginal of the random
vector is not too small: with probability $1-\delta$, the procedure returns
$\wh{\mu}_N$ which satisfies that for every direction $u \in S^{d-1}$,
\[
\inr{\wh{\mu}_N - \mu, u}\le  \frac{C}{\sqrt{N}} \left( \sigma(u)\sqrt{\log(1/\delta)} + \left(\E\|X-\EXP X\|_2^2\right)^{1/2} \right)~,
\]
where $\sigma^2(u) = \var(\inr{X,u})$ and $C$ is a constant.
To achieve this, we require only slightly
more than the existence of the covariance matrix, in the form of a
certain moment-equivalence assumption.

The proof relies on novel bounds for the ratio of empirical and
true probabilities that hold uniformly over certain classes of
random variables.
\end{abstract}

\newpage


\section{Introduction}

The problem of estimating the mean of a high-dimensional random vector with a possibly heavy-tailed distribution is a classical question that has been studied extensively over the years. Recently it has received the attention of mathematical statisticians and theoretical computer scientists. We refer to Lugosi and
Mendelson \cite{LuMe19} for a recent survey and to
Bahmani \cite{Bah20},
Dalalyan and Minasyan \cite{DaMi20},
Diakonikolas, Kane, Pensia \cite{DiKaPe20},
Minsker and Mathieu \cite{MiMa19},
Minsker and Ndaoud \cite{MiNd20}  for a sample of even more recent references.

To formulate the problem, let $X_1,\ldots,X_N$ be independent, identically distributed random
vectors taking values in $\Rd$ with mean $\EXP X = \mu \in \Rd$ (where $X$ is distributed as $X_1$.)
One is interested in constructing
a measurable function $\wh{\mu}_N: (\Rd)^N \to \Rd$ such that $\wh{\mu}_N = \wh{\mu}_N(X_1,\ldots,X_N)$ is close, in some sense,
to the mean $\mu$. A possible meaningful goal is to find an estimator such that, given a confidence parameter $\delta$, satisfies that with probability at least $1-\delta$,
the Euclidean distance $\|\wh{\mu}_N - \mu\|$ is as small as possible.

Constructing an optimal $\wh{\mu}_N$ is not that obvious even when $d=1$. Without any further assumptions on the distribution of the $X_i$ it is impossible to give meaningful performance guarantees even in that case, let alone for a random vector in $\Rd$. However, under minimal conditions, it is possible to construct estimators with remarkably strong properties. The most
common assumption is that the $X_i$ have finite second moment. When $d=1$ one can find $\wh{\mu}_N(\delta)$ such that, with probability at least $1-\delta$,
\begin{equation} \label{eq:sub-gauss-d-1}
|\wh{\mu}_N(\delta) - \mu| \leq c\sigma\sqrt{\frac{\log(1/\delta)}{N}}~,
\end{equation}
where $\sigma^2$ is the variance of $X$ and $c$ is a universal constant (see, e.g., \cite{LuMe19}).

The meaning of \eqref{eq:sub-gauss-d-1} is that even if $X$ is heavy-tailed, $\wh{\mu}_N$ performs as if $X$ were a Gaussian random variable and the estimator were the empirical mean $N^{-1}\sum_{i=1}^N X_i$. Obviously, the empirical mean does not exhibit such a behavior unless $X$ is actually Gaussian (or sub-Gaussian), which indicates that $\wh{\mu}_N$ has to be rather carefully chosen when $X$ is an arbitrary random variable.

The problem for $d>1$ is significantly more complex, though it does have a satisfying answer. Let $X \in \Rd$ and assume (as we do throughout this article) that the covariance matrix of the $X_i$, denoted by $\Sigma = \EXP (X-\mu)(X-\mu)^T$, exists. Quite remarkably, there are
mean estimators that, under this minimal condition, achieve  ``sub-Gaussian'' performance. In particular,
for any $\delta \in (0,1)$, there exists an estimator $\wh{\mu}_N=\wh{\mu}_N(\delta)$ such that, with probability at least $1-\delta$,
\begin{equation}
\label{eq:subgauss}
  \|\wh{\mu}_N - \mu\|\le C\left( \sqrt{\frac{\lambda_1\log(1/\delta) }{N}} + \sqrt{\frac{\sum_{i=1}^d \lambda_i}{N}}  \right)~,
\end{equation}
where $C$ is a universal constant and $\lambda_1\ge \lambda_2 \cdots \ge \lambda_d \ge 0$ are the eigenvalues
of the covariance matrix $\Sigma$, see Lugosi and Mendelson
\cite{LuMe16a,LuMe20}, Hopkins \cite{Hop18}, Cherapanamjeri,
Flammarion, and Bartlett \cite{ChFlBa19},
Depersin and Lecu{\'e} \cite{DeLe19},
Lei, Luh, Venkat, and Zhang \cite{LeLuVeZh20},
 Diakonikolas, Kane, Pensia \cite{DiKaPe20}.

The reason for the term ``sub-Gaussian'' is, again, the comparison to what happens in the Gaussian case and the estimator being the empirical mean. Indeed, when the $X_i$ have a multivariate normal distribution and
$\wt{\mu}_N = (1/N)\sum_{i=1}^N X_i$ is the empirical mean, then (\ref{eq:subgauss}) holds with probability at least $1-\delta$. This bound is of the correct order, since the expected value $\EXP \|\wt{\mu}_N - \mu\|$ is proportional to
$\sqrt{\Tr(\Sigma)/N}= \sqrt{(1/N)\sum_{i=1}^d \lambda_i}$, and the term $\sqrt{(1/N)\lambda_1\log(1/\delta)}$ bounds
the fluctuations, using the Gaussian concentration inequality. We refer to the two terms on the right-hand side of
(\ref{eq:subgauss}) as the \emph{weak} and \emph{strong} terms,
respectively:
the strong term is simply the $L_2$ norm of $\|X-\EXP X\|_2$ and the
weak term corresponds to the largest variance of a one dimensional
marginal of $X$, that is, to $\sup_{u \in S^{d-1}} \sigma(u)$.
Here $S^{d-1}=\{x\in \Rd: \|x\|=1\}$ denotes the Euclidean unit sphere and $\inr{\cdot,\cdot}$ is the standard inner product in $\Rd$.

\begin{remark}
Strong-weak inequalities are an important notion in high-dimensional
probability (see, e.g., Lata{\l}a and Wojtaszczyk \cite{MR2449135}). We explain the connection between our results and these inequalities in Appendix \ref{sec:strong-weak}.
\end{remark}

Clearly, an equivalent way of writing the inequality (\ref{eq:subgauss}) is as follows:
\begin{equation}
\label{eq:subgauss1}
  \forall u \in S^{d-1}: \inr{\wh{\mu}_N - \mu, u}\le  C\left( \sqrt{\frac{\lambda_1\log(1/\delta) }{N}} + \sqrt{\frac{\sum_{i=1}^d \lambda_i}{N}} \right)~.
\end{equation}
It is reasonable to expect that \eqref{eq:subgauss1} is the best
``directional formulation" that one can hope for. Firstly, the error
must have a global component, which is the strong term: directional
information corresponds only to one-dimensional marginals of $X$,
while higher dimensional marginals impact the ability of approximating
the mean. At the same time, any standard way of controlling
fluctuations is based on estimates on the worst direction, leading to
the weak term which involves $\lambda_1$. With that in mind, a more
fine-grained version of \eqref{eq:subgauss1} seems unlikely.

Perhaps contrary to intuition, our main result does precisely that: under a mild additional assumption on $X$ we construct an estimator that, up to the optimal strong term, preforms in every direction as if it were an optimal  estimator of the one dimensional marginal:

%

\begin{tcolorbox}
\begin{theorem}
\label{thm:meanest}
Let $X_1,\ldots,X_N$ be i.i.d.\ random vectors, taking values in $\Rd$, with mean $\mu$ and covariance matrix $\Sigma$
whose eigenvalues are $\lambda_1\ge \lambda_2 \cdots \ge \lambda_d \ge 0$.
Suppose that there exists $q>2$ and a constant $\kappa$ such that, for all $u\in S^{d-1}$,
\begin{equation}
\label{eq:normequivalence}
     \left( \EXP \inr{X-\mu, u}^q \right)^{1/q}  \le \kappa      \left( \EXP \inr{X-\mu, u}^2 \right)^{1/2}~.
\end{equation}
Then for every $\delta \in (0,1)$ there exists a mean estimator
$\wh{\mu}_N$ and constants $0<c,c',C < \infty$
(depending on $\kappa$ and $q$ only)
such that, if $\delta \ge e^{-c'N}$, then,
with probability, at least $1-\delta$,
\begin{equation}
\label{eq:subgauss2}
  \forall u \in S^{d-1}: \inr{\wh{\mu}_N - \mu, u}\le  C\left( \sqrt{\frac{\sigma^2(u)\log(1/\delta) }{N}} + \sqrt{\frac{\sum_{i=c\log(1/\delta)}^d \lambda_i}{N}} \right)~.
\end{equation}

\end{theorem}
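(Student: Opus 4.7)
The plan is to adapt the median-of-means tournament of Lugosi--Mendelson \cite{LuMe16a} by equipping it with a direction-dependent threshold that reflects the desired bound (\ref{eq:subgauss2}). Partition the $N$ observations into $K=\lceil c_0\log(1/\delta)\rceil$ blocks of size $m=N/K$, and let $Z_1,\ldots,Z_K$ denote the corresponding block means. Set
$$r(v) \defeq C_1\left(\sigma(v)\sqrt{\frac{\log(1/\delta)}{N}}+\|v\|\sqrt{\frac{\sum_{i\gr c\log(1/\delta)}\lambda_i}{N}}\right),\qquad v\in\Rd,$$
and define $\wh{\mu}_N$ to be any vector $\nu\in\Rd$ such that $\#\{k:\inr{Z_k-\nu,v}\gr r(v)\}\ls K/4$ for every $v\in\Rd$. (If no such $\nu$ exists, set $\wh{\mu}_N$ arbitrarily; on a favorable event, $\mu$ itself will be a valid candidate.)

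The proof then splits into two steps. \emph{(a) Feasibility of $\mu$.} For each fixed $v$, Chebyshev gives $\PROB(\inr{Z_k-\mu,v}\gr C_2\sigma(v)/\sqrt{m})\ls 1/16$, and a Bernstein bound on the $K$ independent indicators yields, with probability at least $1-e^{-cK}$, that no more than $K/4$ blocks fail for this particular $v$. The essential task is to upgrade this to a bound that holds \emph{uniformly} over $v\in\Rd$ with the threshold $r(v)$ itself varying with $v$. This is precisely where the ratio inequality between empirical and true tail probabilities advertised in the abstract does the work: under (\ref{eq:normequivalence}) one obtains $\psi_q$-type control on the one-dimensional marginals that, combined with chaining, yields a uniform bound on the empirical-to-true tail-probability ratio over the parametrized class of events. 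The reason that only the tail eigenvalues from index $c\log(1/\delta)$ onwards appear in the strong term is that, on the span of the top $c\log(1/\delta)$ eigenvectors, the error in direction $v$ is already absorbed by the weak term $\sigma(v)\sqrt{\log(1/\delta)/N}$; only the orthogonal complement contributes a genuinely global error, whose squared $L_2$-size is $\sum_{i\gr c\log(1/\delta)}\lambda_i$. \emph{(b) Any feasible $\nu$ is close to $\mu$.} This is standard: if $\inr{\nu-\mu,u}>2r(u)$ for some unit vector $u$, then $\inr{Z_k-\nu,u}\gr r(u)$ whenever $\inr{Z_k-\mu,u}\gr -r(u)$, which by (a) applied to $v=-u$ occurs for at least $3K/4$ blocks, contradicting the defining condition on $\nu$. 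Applying the same argument to $-u$ gives the matching lower estimate and hence (\ref{eq:subgauss2}).

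The main obstacle is step (a): turning the pointwise outlier-count bound into a uniform one, where the threshold $r(v)$ combines two terms that scale very differently with $v$. The indicator class $\{\IND[\inr{Z_k-\mu,v}\gr r(v)]:v\in\Rd\}$ is indexed so that the variance, the threshold, and the $\psi_q$-tail of the underlying functional all vary with $v$, so off-the-shelf symmetrization or chaining tools do not apply directly. The moment-equivalence hypothesis (\ref{eq:normequivalence}) is what permits the stand-alone ratio inequality that drives the argument; I would expect the paper to isolate that inequality first and then apply it once to the block-mean process to conclude.
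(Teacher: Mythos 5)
Your plan is a genuinely different construction from the paper's, and it leaves two substantial gaps, one of which makes the estimator as written ill-defined.

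\textbf{First and most importantly,} your threshold $r(v)$ contains the unknown quantity $\sigma(v)=\left(\E\inr{X-\mu,v}^2\right)^{1/2}$, so the candidate set $\{\nu: \#\{k:\inr{Z_k-\nu,v}\ge r(v)\}\le K/4 \ \forall v\}$ is not computable from the data alone; your $\wh{\mu}_N$ is not a bona fide estimator. The paper spends an entire section (Section~\ref{sec:variance}, Proposition~\ref{thm:iso-variance}) constructing a trimmed variance estimator $\psi_N(u)$ from a separate half of the sample, proving that it is accurate to within constant factors simultaneously in all directions $u$ with $\sigma(u)$ above a critical level $r$, and bounded by $Cr^2$ below it. Only then is the mean estimator defined as a point in the intersection of data-dependent slabs built from $\psi_N$ and a trimmed marginal mean $\wh{\nu}_N$. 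Plugging an estimated variance in place of $\sigma(v)$ in your tournament would be a way to repair this, but you would then inherit all the work of Proposition~\ref{thm:iso-variance}, and in particular you would need the ``below the critical level'' regime in which $\psi_N(u)$ can be far smaller than $\sigma^2(u)$ --- this is the source of the $\sum_{i\ge c\log(1/\delta)}\lambda_i$ term and cannot be hand-waved by a heuristic about ``top eigenvectors being absorbed by the weak term.''

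\textbf{Second,} your step (a) is indeed where the whole technical content lives, and you correctly guess that the paper isolates a uniform empirical-to-true tail ratio bound (Theorem~\ref{thm:uniform-properties}). But that theorem only delivers uniform control over the class $V_r=\{\IND_{h>t}: h\in\F\cup(-\F),\ \|h\|_{L_2}\ge r\}$, i.e.\ only for directions with $\sigma(v)\ge r$. For the complementary regime $\sigma(v)<r$ the ratio bound says nothing, and the paper uses an entirely separate net-and-contraction argument (the ``below the critical level'' part of the proof of Proposition~\ref{thm:iso-variance}, repeated in Proposition~\ref{prop:marginal}) to control the sorted empirical process there. Your outline treats step (a) as a single uniform statement, which is not how the machinery operates: the critical level $r^2\asymp\frac{1}{N}\sum_{i\ge c\log(1/\delta)}\lambda_i$ is exactly the threshold separating the two regimes, and both regimes require dedicated arguments. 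Your step (b) is fine and is the easy part in both constructions.

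In short, the tournament-with-directional-threshold idea is plausible in spirit, but as written it is not an estimator, and the ``ratio inequality does the work'' step omits the critical-level split and the below-threshold analysis that the paper needs; both are genuine, not cosmetic, gaps.
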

\end{tcolorbox}

It should be stressed that a proof of such a result, that is sensitive
to directions, calls for the development of a completely new
machinery. In \eqref{eq:subgauss1} one is allowed fluctuations at
scale $\sqrt{\frac{\lambda_1\log(1/\delta) }{N}}$ in all directions,
and that plays a crucial component in the proof of
\cite{LuMe16a,LuMe20}. In contrast, inequality \eqref{eq:subgauss2}
calls for (uniform) control over all nontrivial directions but the
allowed scales of fluctuations in each direction can be much smaller
than $\sqrt{\frac{\lambda_1\log(1/\delta) }{N}}$. That difference
renders useless the methods used in the proof of \eqref{eq:subgauss1}.

The main technical novelty of this article is the machinery that leads
to the necessary directional control.
It is presented in Section \ref{sec:uniform}. This machinery consists of bounds for \emph{ratios} of empirical and
true probabilities that hold uniformly in a class of
functions. Informally put, we are able to control
\[
\sup_{\{f \in \F, \|f\|_{L_2} \geq r\}} \sup_{t: \PROB\{f(X)>t\} \geq \Delta} \left|\frac{N^{-1}\sum_{i=1}^N \IND_{f(X_i)>t}}{\PROB\{f(X)>t\}} -1 \right|
\]
for appropriate values of $r$ and $\Delta$.

In other words, we show that, under minimal assumptions on the class
$\F$, the empirical frequencies of level sets of every $f \in \F$ are
close, in a multiplicative sense, to their true probabilities---as
long as $\|f\|_{L_2}$ and $\PROB\{f(X)>t\}$ are large
enough. Estimates of this flavor have been derived before, but only in
a limited scope. Examples include the classical inequalities of
Vapnik-Chervonenkis in {\sc vc} theory, dealing with small classes of
binary-valued functions (see also, Gin\'{e} and Koltchinskii
\cite{MR2243881} for some results for real-valued classes). Existing
ratio estimates are often based on the highly restrictive assumption
that the collection of level sets,
say of the form $\{ \{x: f(x) > t\} : f \in \F, \ t \geq t_0\}$, is
small in the {\sc vc} sense.
Unfortunately, in the general context that is required here, such an assumption need not hold.

The new method we develop here is based on a completely different argument that builds on the so-called \emph{small-ball method}. The relevant ratio estimate can be found in Theorem \ref{thm:uniform-properties}.

\vskip0.3cm

While the main thrust in \eqref{eq:subgauss2} is the directional dependence, it is worth noting that the strong term is better than $(\E \|X - \EXP X\|_2^2)^{1/2}$. To put this in perspective, consider again the example when the distribution of the data is Gaussian and the estimation procedure is the empirical mean $\wt{\mu}_N= (1/N)\sum_{i=1}^N X_i$.
The next observation shows that even in this well-behaved example,
the strong term needs to be at least of the order
\[
\sqrt{\frac{\sum_{i>k}\lambda_i}{N}}
\]
where $k$ is proportional to $\log(1/\delta)$.

\begin{proposition}
\label{prop:empmean}
Let $\wt{\mu}_N= (1/N)\sum_{i=1}^N X_i$ where the $X_i$ are independent Gaussian vectors with mean $\mu$ and
covariance matrix $\Sigma$.
Suppose that there exists a constant $C$ such that, for all $\delta,N,\mu$, and $\Sigma$,
with probability at least $1-\delta$,
\begin{equation}
\label{eq:ass}
\forall u \in S^{d-1}: \inr{\wt{\mu}_N - \mu, u}\le  C \sqrt{\frac{\sigma^2(u)\log(1/\delta) }{N}} + S~.
\end{equation}
Then there exists a constant $C'$ depending on $C$ only, such that the ``strong term'' $S$ has to satisfy
\[
   S \ge C' \sqrt{\frac{\sum_{i>k_0}\lambda_i}{N}}
\]
where $k_0= 1+ (2C+\sqrt{2})^2\log(1/\delta)$.
\end{proposition}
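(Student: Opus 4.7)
The plan is to rescale by setting $Y := \sqrt{N}(\wt{\mu}_N - \mu)$, which is Gaussian with covariance $\Sigma$, so the hypothesis becomes: with probability at least $1-\delta$, $\inr{Y, u} \leq C\sigma(u)\sqrt{\log(1/\delta)} + S\sqrt{N}$ for every $u \in S^{d-1}$. Working in the eigenbasis of $\Sigma$, write $Y = \sum_i\sqrt{\lambda_i}\,g_ie_i$ with i.i.d.\ standard Gaussians $g_i$, and set $T := \sum_{i>k_0}\lambda_i$. I will extract two inequalities on $S\sqrt{N}$ by plugging two carefully chosen random directions into the hypothesis, and then combine them via case analysis; the identity $\sqrt{k_0-1} = (2C+\sqrt{2})\sqrt{\log(1/\delta)}$ is what makes the two sides match up.

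\emph{Tail direction.} Plug in $u^* := Y_{>k_0}/\|Y_{>k_0}\|$, which lies in $\mathrm{span}(e_{k_0+1},\ldots,e_d)\cap S^{d-1}$. Since $\sigma(u^*)^2 \leq \lambda_{k_0+1}$ and $\inr{Y,u^*} = \|Y_{>k_0}\|$, the hypothesis gives $\|Y_{>k_0}\| \leq C\sqrt{\lambda_{k_0+1}\log(1/\delta)} + S\sqrt{N}$ with probability $\geq 1-\delta$. Since $\|Y_{>k_0}\|$ is $\sqrt{\lambda_{k_0+1}}$-Lipschitz in the underlying standard Gaussian, with $\E\|Y_{>k_0}\|^2 = T$ and $\var(\|Y_{>k_0}\|) \leq \lambda_{k_0+1}$, Gaussian concentration gives $\|Y_{>k_0}\| \geq \sqrt{T} - (1+\sqrt{2})\sqrt{\lambda_{k_0+1}\log(1/\delta)}$ with probability $\geq 1-\delta$. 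Intersecting the two events (nonempty for $\delta<1/2$) yields
\begin{equation*}
S\sqrt{N} \geq \sqrt{T} - (C+1+\sqrt{2})\sqrt{\lambda_{k_0+1}\log(1/\delta)}. \tag{A}
\end{equation*}

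\emph{Top direction.} For any unit $u \in V_{\leq k_0} := \mathrm{span}(e_1,\ldots,e_{k_0})$, the substitution $v := \Sigma^{1/2}u$ rewrites the hypothesis as $\inr{g,v} - C\sqrt{\log(1/\delta)}\,\|v\| \leq S\sqrt{N}$ on the ellipsoid slice $\{\|\Sigma^{-1/2}v\| = 1\} \cap V_{\leq k_0}$, using $\sigma(u) = \|v\|$ and $\inr{Y, u} = \inr{g, v}$. Writing $v = r w$ with $\|w\|=1$ and $r = 1/\sqrt{\sum_{i\leq k_0}w_i^2/\lambda_i}$, pick $w = g_{\leq k_0}/\|g_{\leq k_0}\|$; the crude bound $\sum_{i\leq k_0}w_i^2/\lambda_i \leq 1/\lambda_{k_0}$ gives $r \geq \sqrt{\lambda_{k_0}}$, so the hypothesis forces $\sqrt{\lambda_{k_0}}\bigl(\|g_{\leq k_0}\| - C\sqrt{\log(1/\delta)}\bigr)_+ \leq S\sqrt{N}$. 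Gaussian concentration for the $1$-Lipschitz $\|g_{\leq k_0}\|$, together with $\E\|g_{\leq k_0}\| \geq \sqrt{k_0-1} = (2C+\sqrt{2})\sqrt{\log(1/\delta)}$, yields $\|g_{\leq k_0}\| \geq 2C\sqrt{\log(1/\delta)}$ with probability $\geq 1-\delta$, hence
\begin{equation*}
S\sqrt{N} \geq C\sqrt{\lambda_{k_0}\log(1/\delta)}. \tag{B}
\end{equation*}

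\emph{Conclusion by cases.} If $T \geq 4(C+1+\sqrt{2})^2\lambda_{k_0+1}\log(1/\delta)$, then the subtracted term in (A) is at most $\sqrt{T}/2$, yielding $S\sqrt{N} \geq \sqrt{T}/2$. Otherwise $\lambda_{k_0+1}\log(1/\delta) > T/(4(C+1+\sqrt{2})^2)$; using $\lambda_{k_0} \geq \lambda_{k_0+1}$, (B) gives $S\sqrt{N} \geq C\sqrt{T}/(2(C+1+\sqrt{2}))$. In either case, $S \geq C'\sqrt{T/N}$ with $C' = \min\{1/2,\ C/(2(C+1+\sqrt{2}))\}$. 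The main obstacle is (B): a single eigenvector works only when $C < \sqrt{2}$, and the combination of the ellipsoidal substitution $v = \Sigma^{1/2}u$ with the calibration built into $k_0 = 1 + (2C+\sqrt{2})^2\log(1/\delta)$ (which ensures $\sqrt{k_0-1} - \sqrt{2\log(1/\delta)} = 2C\sqrt{\log(1/\delta)}$) is what makes (B) hold for general $C$.
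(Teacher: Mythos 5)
Your proof is correct and follows essentially the same route as the paper: rescale to $Y=\Sigma^{1/2}g$, use the ellipsoidal substitution $v=\Sigma^{1/2}u$ so that the top-$k_0$ direction reduces to $\|g_{\le k_0}\|$, apply Gaussian concentration together with the Gaussian Poincar\'e inequality for the tail block, and finish with the same two-case split. The only structural difference is that the paper first defines the threshold $k$ implicitly by where $S$ falls among the values $C\sqrt{\lambda_k\log(1/\delta)/N}$ and then proves $k\le k_0$, which is a cosmetic reorganization of the ingredients you use directly with $k_0$.
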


The proof is given in Section \ref{sec:proofofempmean} of the
Appendix.

\begin{remark}
An easy modification of the proof of Proposition \ref{prop:empmean} reveals that
the lower bound is tight in the sense that, if the $X_i$ are
independent Gaussians, then the empirical mean $\wt{\mu}_N$ satisfies that,
with probability, at least $1-\delta$,
\[
\forall u \in S^{d-1}: \inr{\wt{\mu}_N - \mu, u}\le  C \left(
\sqrt{\frac{\sigma^2(u)\log(1/\delta) }{N}} + \sqrt{\frac{\sum_{i>k_1}\lambda_i}{N}}
\right)~
\]
where $k_1 =c \log(1/\delta)$, for some constants $c$ and $C$.
\end{remark}

The article is organized as follows. In the next two sections we develop the technical machinery we require. First, in Section \ref{sec:tailintegration} we recall --- and appropriately
modify --- the solution of Mendelson \cite{Men20} of the following moment estimation problem: given a real random variable $Z$, find an almost isometric, data dependent estimator of $\EXP |Z|^p$. In other words, find $\wh{Z}_p$ such that, $(1-\eps)\EXP |Z|^p \leq  \wh{Z}_p \leq (1+\eps)\EXP |Z|^p$, with probability $1-\delta$.

The analysis of the moment estimation problem reveals the importance of uniform control of ratios of empirical and
true probabilities, and that is the subject of Section \ref{sec:uniform}
where the new general inequality is proven.

The next component in the proof of Theorem \ref{thm:meanest} is a covariance
estimator (i.e., an estimator of all the directional variances
$\sigma^2(u)$), which we introduce and analyze in Section \ref{sec:variance}.
\emph{Covariance estimation} has received quite a lot of attention
lately, see, for example,
Catoni \cite{Cat16},
Giulini \cite{Giu18}.
Koltchinskii and Lounici \cite{KoLo17},
Lounici \cite{Lou14},
Mendelson \cite{Men18a},
Mendelson and Zhivotovksiy \cite{MeZh18},
Minsker \cite{Min18},
Minsker and Wei \cite{MiWe20}.

The notion of estimation needed here is rather weak: we only require that the
variances are approximated within a multiplicative constant factor.  Our construction is based on the moment estimators developed in Section \ref{sec:tailintegration} and in Section \ref{sec:uniform}:  we construct an estimator $\psi_N(u)$ of the directional variances such that,
with probability at least $1-\delta$, simultaneously for all $u$ such that $\sigma^2(u) \ge
\sum_{i>c\log(1/\delta)}\lambda_i$, all estimated variances
$\psi_N(u)$ satisfy
\[
     \frac{1}{4} \le   \frac{\psi_N(u)}{\sigma^2(u)} \le 2~.
\]
Moreover, for all other $u$, we have $\psi_N(u) \le C
\sum_{i>c\log(1/\delta)}\lambda_i$.

\begin{remark}
It should be noted that covariance estimators, quite different
in nature, but with performance bounds of the same
spirit, were defined by Catoni \cite{Cat16} and
Giulini \cite{Giu18}, under certain fourth-moment assumptions.
\end{remark}

Finally, in Section \ref{sec:meanest} we define the main multivariate mean
estimation procedure and prove Theorem  \ref{thm:meanest}.
Some of the proofs are relegated to the Appendix.


\remark ({\sc nonatomic distributions.})
To avoid unimportant but somewhat tedious technicalities, we assume
throughout that the distribution of the $X_i$ is absolutely continuous
with respect to the Lebesgue measure. This implies that the
distribution of  $\inr{X,u}$ is
nonatomic for all $u\in S^{d-1}$ and we do not need to worry about
multiple points taking the same value---which makes the definition of
trimming and quantiles simpler. This assumption is not restrictive
because one may always add a tiny random perturbation to each data
point, converting the distribution absolutely continuous, and without
changing the mean vector too much.

%

\remark ({\sc Hilbert spaces.})
For convenience, we present our results for random variables taking
values in $\Rd$. However, the main results remain true without
modification when $X$ takes values in any separable Hilbert space ${\cal H}$. The only
condition that needs to be modified is absolute continuity. It
suffices that $\inr{u,X}$ has a continuous distribution for all $u \in
{\cal H} \setminus \{0\}$.


\remark ({\sc computation.})
Our definition of a mean estimator as a (measurable) function of the
data ignores important computational issues. An important branch of
research has focused on computational issues of robust statistical
estimation. In particular, mean estimators achieving sub-Gaussian
performance of the type \eqref{eq:subgauss} that can be computed in
polynomial time have been proposed, see
Hopkins \cite{Hop18}, Cherapanamjeri,
Flammarion, and Bartlett \cite{ChFlBa19},
Depersin and Lecu{\'e} \cite{DeLe19},
Lei, Luh, Venkat, and Zhang \cite{LeLuVeZh20},
Diakonikolas, Kane, Pensia \cite{DiKaPe20}. It is an interesting open
problem whether there exists an efficiently computable estimator
achieving a performance like the one announced in Theorem
\ref{thm:meanest}.

\section{Empirical tail integration and moment estimation}
\label{sec:tailintegration}

In this section we consider the problem of estimating the raw moments
of a real random variable. The estimators studied here are trimmed
estimators, that is, based on the empirical mean after discarding the
smallest and largest values of the sample. Such estimators have been
studied extensively (see Lugosi and Mendelson \cite{LuMe19} for
pointers to the literature). The properties presented in this section have been mostly
developed by Mendelson \cite{Men20} and the exposition below is a slight
modification of the arguments in \cite{Men20}. For completeness, we
present the proofs in Section \ref{sec:proofsoftailintegration}.

Note that the results in this section are for moments of any order
$p\ge 1$, although we only need the special cases $p=1$ and $p=2$.

Let $p\ge 1$ and let $Z$ be a real-valued random variable with continuous distribution
such that $\EXP |Z|^p <\infty$.
Let
$Z_1^N=(Z_1,\ldots,Z_N)$ be a sample of $N$ independent copies of
$Z$.
For fixed $1/N <\theta \leq 1/2$ let
$J_+$ be the set of indices of the $\theta N$ largest values of
$(Z_i)_{i=1}^N$ and denote by $J_-$ the set of indices of the $\theta
N$ smallest values.
(Since $Z$ is assumed to have a continuous distribution, $J_+$ and
$J_-$ are uniquely defined, with probability one.)
Writing $[N]=\{1,\ldots,N\}$, our goal is to show that with high probability,
$$
\Psi_{p,\theta}(Z_1^N) = \frac{1}{N} \sum_{i \in [N] \backslash (J_+ \cup J_-)} |Z_i|^p
$$
is a good estimator of $\E |Z|^p$ for an appropriately chosen value of
$\theta$.

Denote the empirical measure of $Z$ by $\PROB_N$, that is,
for all measurable sets $A\subset \R$, we write
\[
\PROB_N\{Z\in A\} =
     \frac{1}{N} \sum_{i=1}^N \IND_{Z_i\in A}~.
\]
We show that, in order to ensure that $\Psi_{p,\theta}$ is a good
estimator, it suffices to control the ratios $\PROB_N\{Z\in
I\}/\PROB\{Z\in I\}$, for
intervals\footnote{By an \emph{interval} we mean open, closed,
  half-open intervals in $\R$, including rays.} $I$.
Moreover, we show that, under similar assumptions,
$$
\Phi_\theta(Z_1^N)=\frac{1}{N} \sum_{i \in [N] \backslash (J_+ \cup J_-)} Z_i
$$
is a good estimator of the mean $\E Z$.

Our starting point is the following definition that describes some properties that ensure that $\Psi_{p,\theta}$ and $\Phi_
\theta$ are well-behaved, in a sense made precise below.
\begin{tcolorbox}
\begin{definition}
\label{def:ratioconditions}
Set $0 <\Delta,\theta <1/100$. Let ${\cal A}$ be the
event on which the following properties hold:
\begin{description}
\item{$(1)$}
for all nonnegative integers  $j$ and $t \geq 0$ such that
$2^{-j}\PROB\{Z>t\} \geq \Delta$, we have
$$
\left|\frac{\PROB_N\{Z > t\}}{\PROB\{Z>t\} } -1 \right| \leq 2^{-j/2-1}
$$
and if $2^{-j}\PROB\{ Z<-t\} \geq \Delta$, then
$$
\left|\frac{\PROB_N\{Z < - t\}}{\PROB\{Z < -t\}} -1 \right| \leq 2^{-j/2-1}~;
$$
\item{$(2)$} for any interval $I \subset \R$,
$$
\PROB_N\{Z \in I\} \leq \frac{3}{2}\PROB\{Z \in I\} + 2\Delta~;
$$
\item{$(3)$} $\PROB\{Z>0\} \geq \eta$ and $\PROB\{ Z<0 \} \geq \eta$ for $\eta = 4\theta+16\Delta$.
\end{description}
\end{definition}
\end{tcolorbox}


Note that property $(1)$ requires that the relative error of the empirical
measure becomes smaller for larger sets.
The idea behind the conditions of Definition \ref{def:ratioconditions}
is that one may write
$$
\E|Z|^p=\int_0^\infty pt^{p-1} \left(\PROB\{Z>t\}+\PROB\{-Z>t\}\right)dt~,
$$
and with sufficient control of the ratios
$\PROB_N\{Z>t\}/\PROB\{Z>t\}$ and $\PROB_N\{-Z>t\}/\PROB\{-Z>t\}$,
 the integral can be well approximated by an empirical functional like
 $\Psi_{p,\theta}$.
That approximation can be valid even when the distortion is relatively
large for sets $\{Z>t\}$ or $\{-Z>t\}$ whose measure is small,
but a small distortion is essential for sets of relatively large
measure, as such sets have a much higher impact on the integral.

The main technical fact we use is the following estimate on the positive an negative parts of $Z$, denoted throughout by $Z_+$ and $Z_-$, respectively.
For any $\alpha \in (0,1)$, we denote by $Q_{\alpha}$ the
$\alpha$-quantile of the random variable $Z$, that is, $Q_{\alpha}$
is the unique value such that $\PROB\{Z \le Q_{\alpha}\} = \alpha$.

\begin{theorem}
\label{thm:main-integral}
Let $p \geq 1$ and let $Z$ be a real random variable with continuous
distribution such that $\EXP|Z|^p<\infty$.
 Let $0< \Delta,\theta < 1/100$ be such that $\theta N$ is an integer,
 $\theta \geq 7\Delta$, and let ${\cal A}$ be the event defined in Definition \ref{def:ratioconditions}.
Set
$$
\theta_1=2\theta+8\Delta~, \qquad \text{and} \qquad \theta_2=(2\theta-8\Delta)/3~.
$$
Then, on the event ${\cal A}$, we have
\begin{description}
\item{(a)} For every $i \in J_+$, $Z_i \geq 0$ and for every $i \in
  J_-$, $Z_i \leq 0$. In particular, $Z_i=(Z_i)_+$ for all $i\in J_+$,
  and $Z_i=(Z_i)_-$ for all $i\in J_-$.
\item{(b)}
\[
\frac{1}{N}\sum_{j \in [N] \backslash J_+ }  (Z_i)_+^p \leq \E Z_+^p + 2\sqrt{\Delta} \int_0^{Q_{1-\theta_2}} pt^{p-1}\sqrt{\PROB\{Z>t\}}dt~,
\]
and
\[
\frac{1}{N}\sum_{i \in [N] \backslash J_+ } (Z_i)_+^p
\geq
\E Z_+^p - 3\E \left[Z^p \IND_{\{Z \geq Q_{1-\theta_1}\}} \right]
-  2\sqrt{\Delta} \int_0^{Q_{1-\theta_1}} 2t \sqrt{\PROB\{Z>t\}}dt~.
\]
\item{(c)} The analogous claim to (b) holds for $Z_-$, the negative part of $Z$.
\end{description}
\end{theorem}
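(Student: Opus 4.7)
The plan is to rewrite the trimmed sum as a tail integral, sandwich the trimming threshold between two population quantiles, and use the ratio control of Definition \ref{def:ratioconditions} to compare the integral to $\E Z_+^p$. Throughout, let $\hat T_+ := Z_{(N-\theta N)}$ denote the largest sample value not in $J_+$.

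Part (a) follows at once from properties (1) and (3): property (1) with $t=0$, $j=0$ is legal because $\PROB\{Z>0\}\geq \eta\geq \Delta$, and it gives $\PROB_N\{Z>0\}\geq \tfrac{1}{2}\PROB\{Z>0\}\geq 2\theta+8\Delta>\theta$. Hence strictly more than $\theta N$ sample values are positive, so the $\theta N$ largest of them are all nonnegative; the negative case is symmetric. On this event the layer-cake identity
\[
\frac{1}{N}\sum_{i\in[N]\setminus J_+}(Z_i)_+^p \;=\; \int_0^{\hat T_+} p\, t^{p-1}\bigl(\PROB_N\{Z>t\}-\theta\bigr)\,dt
\]
holds, because for every $t<\hat T_+$ exactly the $\theta N$ members of $J_+$ have $Z_i>t$. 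I would then bracket $\hat T_+$ by true quantiles: property (2) with $I=(\hat T_+,\infty)$, together with $\PROB_N\{Z>\hat T_+\}=\theta$, yields $\PROB\{Z>\hat T_+\}\geq(2\theta-4\Delta)/3\geq \theta_2$, so $\hat T_+\leq Q_{1-\theta_2}$; conversely, if $\PROB\{Z>\hat T_+\}$ exceeded $2\theta\leq \theta_1$, property (1) with $j=0$ would force $\PROB_N\{Z>\hat T_+\}>\theta$, a contradiction, so $\hat T_+\geq Q_{1-\theta_1}$.

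With $\hat T_+$ so pinned down, I would obtain (b) by splitting $\PROB_N\{Z>t\}=\PROB\{Z>t\}+(\PROB_N-\PROB)\{Z>t\}$ and invoking the identity $\int_0^a pt^{p-1}\PROB\{Z>t\}\,dt = \E Z_+^p - \E[Z^p\IND_{Z>a}] + a^p\PROB\{Z>a\}$. For the upper bound, drop the nonnegative $\theta \hat T_+^p$ term, bound the main integral by $\E Z_+^p$, and truncate the error integral at $\hat T_+\leq Q_{1-\theta_2}$. For the lower bound, exploit the nonnegativity of $\PROB_N\{Z>t\}-\theta$ on $[Q_{1-\theta_1},\hat T_+]$ to replace the upper limit $\hat T_+$ by $Q_{1-\theta_1}$; then the resulting boundary term $Q_{1-\theta_1}^p(\theta_1-\theta)$ is positive and can be discarded, leaving the main contribution $\E Z_+^p-\E[Z^p\IND_{Z\geq Q_{1-\theta_1}}]$. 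The deviation integral is controlled pointwise by choosing, for each $t$, the largest integer $j$ with $2^{-j}\PROB\{Z>t\}\geq \Delta$; this yields $|\PROB_N-\PROB|\{Z>t\}\lesssim \sqrt{\Delta\,\PROB\{Z>t\}}$, producing the advertised $\sqrt{\Delta}\int pt^{p-1}\sqrt{\PROB\{Z>t\}}\,dt$ term. Part (c) is part (b) applied to $-Z$, whose distribution satisfies the same hypotheses of Definition \ref{def:ratioconditions}.

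The main obstacle I anticipate is the bookkeeping of sign-sensitive boundary terms at $\hat T_+$: integrating naively up to $\hat T_+$ in the lower bound leaves a remainder $\hat T_+^p(\PROB\{Z>\hat T_+\}-\theta)$ whose sign depends on whether $\PROB\{Z>\hat T_+\}$ exceeds $\theta$, forcing a case analysis one would like to avoid. The substitution $\hat T_+\rightsquigarrow Q_{1-\theta_1}$ in the lower bound sidesteps this cleanly, and the extra constants appearing in the theorem (the factor $3$ multiplying $\E[Z^p\IND_{Z\geq Q_{1-\theta_1}}]$ and the $2$ in $2\sqrt{\Delta}$) should be absorbable slack for these sub-leading contributions.
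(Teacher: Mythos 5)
Your proof is correct, and it reorganizes the boundary bookkeeping in a way that is cleaner than the paper's. The paper works with $\wh{Q}_+ = Z^\sharp_{\theta N}$ (the smallest value that is trimmed) and derives a two-sided sandwich (Lemma~\ref{lemma:int-preliminary}) whose lower inequality carries the extra term $-\theta\wh{Q}_+^p$; that term then requires the separate estimate \eqref{eq:hat-Q-tail} to bound it by $2\E[Z^p\IND_{\{Z\ge Q_{1-\theta_1}\}}]$, which is precisely where the factor $3$ in the statement comes from. You instead parameterize by $\hat T_+=Z^\sharp_{\theta N+1}$ (the largest surviving value) and exploit the exact layer-cake identity
\[
\frac{1}{N}\sum_{i\notin J_+}(Z_i)_+^p \;=\; \int_0^{\hat T_+} p\,t^{p-1}\bigl(\PROB_N\{Z>t\}-\theta\bigr)\,dt~,
\]
in which the integrand is nonnegative. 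For the lower bound this lets you truncate at $Q_{1-\theta_1}\le\hat T_+$ and then discard the resulting positive boundary contribution $(\theta_1-\theta)Q_{1-\theta_1}^p>0$, so no analogue of \eqref{eq:hat-Q-tail} is needed and the tail term appears with coefficient $1$ rather than $3$; your deviation estimate $\lvert(\PROB_N-\PROB)\{Z>t\}\rvert\le\sqrt{\Delta\,\PROB\{Z>t\}}$ similarly lands inside the $2\sqrt{\Delta}$ slack of the statement. The rest is the same machinery as the paper's: bracketing the empirical threshold between $Q_{1-\theta_1}$ and $Q_{1-\theta_2}$ via properties (1)--(2) of Definition~\ref{def:ratioconditions} (your bracketing of $\hat T_+$ mirrors Lemma~\ref{lemma:est-on-hat-Q} applied one order statistic lower), and pointwise ratio control by taking the largest admissible $j$ in property (1) (this is what Lemma~\ref{lemma:integral-1} packages). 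So both proofs rest on the same ratio estimates; yours simply does the accounting at the other edge of the trimmed set, which removes one step and sharpens a constant.
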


The proof of Theorem \ref{thm:main-integral} is given in Section
\ref{sec:proofsoftailintegration}.
We remark here that, as it is shown in the proof, $Q_{1-\theta_1},
Q_{1-\theta_2}>0$ on the event ${\cal A}$.

In order to estimate the $p$-th moment of a random variable $Z$, we
simply write
\begin{eqnarray*}
\frac{1}{N} \sum_{i \in [N] \backslash (J_+ \cup J_-)} |Z_i|^p
& = &\frac{1}{N} \sum_{i \in [N] \backslash (J_+ \cup J_-)} (Z_i)_+^p + \frac{1}{N} \sum_{i \in [N] \backslash (J_+ \cup J_-)} (Z_i)_-^p
\\
& = & \frac{1}{N} \sum_{i \in [N] \backslash J_+}  (Z_i)_+^p + \frac{1}{N} \sum_{i \in [N] \backslash  J_-} (Z_i)_-^p~,
\end{eqnarray*}
where the last equality holds on the event ${\cal A}$ because of
part (a) of Theorem \ref{thm:main-integral}.
Now part (b) may be used to show that the two
terms on the right-hand side are close to their means $\E Z_+^p$ and $\E Z_-^p$.

We show in Section \ref{sec:uniform} that properties $(1)-(2)$ in
Definition \ref{def:ratioconditions} are satisfied with high
probability.
Property $(3)$ means that the random variable $Z$ is sufficiently
``balanced".

Next we derive a corollary of Theorem \ref{thm:main-integral} that is
more convenient to use. To this end, set
\[
{\cal E}_{T,p} = 2 \sqrt{\Delta}\int_0^T pt^{p-1} \sqrt{\PROB\{|Z|>t\}}dt~.
\]

The following lemma was established by Mendelson \cite{Men20} for
nonnegative-valued, absolutely continuous random variables. The
extension to the case below (allowing an atom at zero) is
straightforward; the proof is omitted.

\begin{lemma}
\label{lemma:basic-parameters}
There is an absolute constant $c$ for which the following holds.
Let $p\ge 1$ and $\kappa \in (0,1)$.
Let $Z$ be nonnegative random variable whose distribution is a mixture of a an
absolutely continuous component and an atom at $0$, such that $\EXP
Z^{2p} < \infty$. Then
$$
\E \left[ Z^p \IND_{\{Z > Q_{1-\kappa}\}} \right] \leq \sqrt{\kappa} \left(\EXP Z^{2p}\right)^{1/2}~,
$$
and
$$
{\cal E}_{Q_{1-\kappa},p} \leq c\sqrt{\Delta} \sqrt{\log\left(\frac{1}{\kappa}\right)} \left(\EXP Z^{2p}\right)^{1/2}~.
$$
Moreover, if $\EXP Z^q < \infty$ for some $q>2p$, then
$$
{\cal E}_{Q_{1-\kappa},p} \leq c_{q,p}\sqrt{\Delta}  \left(\EXP Z^q\right)^{p/q}~,
$$
where $c_{q,p} = c p/(q-2p)$ for a numerical constant $c>0$.
\end{lemma}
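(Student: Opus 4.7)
The plan is to handle the three assertions separately, with Claims 2 and 3 reduced to a single one-dimensional integral bound via a change of variables.

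For the first assertion, the bound is immediate from the Cauchy–Schwarz inequality applied to $Z^p \cdot \mathbbm{1}_{\{Z>Q_{1-\kappa}\}}$:
\[
\EXP\bigl[Z^p \mathbbm{1}_{\{Z>Q_{1-\kappa}\}}\bigr]
\le (\EXP Z^{2p})^{1/2}\,\PROB\{Z>Q_{1-\kappa}\}^{1/2}
= \sqrt{\kappa}\,(\EXP Z^{2p})^{1/2}\,,
\]
using that the distribution of $Z$ is absolutely continuous away from $0$, so that the quantile satisfies $\PROB\{Z>Q_{1-\kappa}\}=\kappa$ whenever $\kappa < \PROB\{Z>0\}$.

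For the second assertion, I would substitute $u=t^p$, $du = p t^{p-1}dt$, and write $Y=Z^p$, $R=(Q_{1-\kappa})^p$, so that $\PROB\{Y>R\}=\kappa$ and
\[
\int_0^{Q_{1-\kappa}} p t^{p-1}\sqrt{\PROB\{Z>t\}}\,dt
\;=\; \int_0^R \sqrt{\PROB\{Y>u\}}\,du~.
\]
Markov's inequality applied to $Y^2$ yields $\sqrt{\PROB\{Y>u\}} \le (\EXP Y^2)^{1/2}/u$ for $u>0$, and the same inequality also gives $R \le \kappa^{-1/2}(\EXP Y^2)^{1/2}$. Splitting the integral at $u_0 = (\EXP Y^2)^{1/2}$, bounding the integrand by $1$ on $[0,u_0]$ and by $u_0/u$ on $[u_0,R]$, gives
\[
\int_0^R \sqrt{\PROB\{Y>u\}}\,du \;\le\; u_0 + u_0 \log(R/u_0)_+ \;\le\; u_0\bigl(1+\tfrac12\log(1/\kappa)\bigr)\,,
\]
which, recalling $\EXP Y^2=\EXP Z^{2p}$ and inserting the factor $2\sqrt{\Delta}$, yields the bound claimed.

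For the third assertion, the same change of variables reduces the problem to bounding $\int_0^R \sqrt{\PROB\{Y>u\}}\,du$, but this time using the higher moment $\EXP Y^{q/p} = \EXP Z^q$. Markov now gives $\sqrt{\PROB\{Y>u\}} \le (\EXP Y^{q/p})^{1/2}/u^{q/(2p)}$, and the exponent $q/(2p) > 1$ makes the tail integrable. Choosing the split point $u_0 = (\EXP Y^{q/p})^{p/q} = (\EXP Z^q)^{p/q}$ so that Markov becomes binding precisely at $u_0$, one obtains
\[
\int_0^R \sqrt{\PROB\{Y>u\}}\,du
\;\le\; u_0 \;+\; (\EXP Y^{q/p})^{1/2}\cdot \frac{u_0^{1-q/(2p)}}{q/(2p)-1}
\;=\; u_0\Bigl(1+\frac{2p}{q-2p}\Bigr)\,,
\]
and the resulting constant $1+2p/(q-2p) = q/(q-2p)$ is absorbed into $c_{q,p}=cp/(q-2p)$ after adjusting the absolute constant. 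The main delicacy to watch is ensuring that this choice of $u_0$ also dominates the endpoint $R$ (which it does, up to a universal multiple, since by Markov $R \le \kappa^{-p/q}(\EXP Z^q)^{p/q}$ and the $\kappa^{-p/q}$ factor only improves the bound). No $\log(1/\kappa)$ factor appears precisely because the stronger moment condition makes the tail integral converge on $[u_0,\infty)$ without cutoff at $R$.
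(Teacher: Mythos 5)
Your proofs of the first and third assertions are essentially right (modulo the constant form in the third, where your bound $q/(q-2p)$ rather than $cp/(q-2p)$ is arguably the correct one as $q\to\infty$, but this is a cosmetic issue in the lemma's statement). The second assertion, however, has a genuine gap: the Markov-plus-split argument is too lossy by a factor of $\sqrt{\log(1/\kappa)}$. Splitting at $u_0=(\EXP Y^2)^{1/2}$ and bounding $\sqrt{\PROB\{Y>u\}}$ by $1$ on $[0,u_0]$ and by $u_0/u$ on $[u_0,R]$ yields
\[
\int_0^R\sqrt{\PROB\{Y>u\}}\,du \le u_0\Bigl(1+\tfrac12\log(1/\kappa)\Bigr),
\]
which grows like $\log(1/\kappa)$, not like $\sqrt{\log(1/\kappa)}$; for small $\kappa$ (which is exactly the regime the paper uses, $\kappa\lesssim\Delta\lesssim 1/100$) this is strictly weaker than the claimed bound and is not absorbable into the absolute constant $c$. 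The point is that the pointwise Markov bound $\sqrt{\PROB\{Y>u\}}\le u_0/u$ throws away the integrated information in $\EXP Y^2$. The fix is to apply Cauchy--Schwarz to the integral itself rather than pointwise: write
\[
\int_0^R\sqrt{\PROB\{Y>u\}}\,du
\le\left(\int_0^R\frac{du}{u+u_0}\right)^{1/2}\left(\int_0^R(u+u_0)\PROB\{Y>u\}\,du\right)^{1/2}.
\]
The second factor is at most $\bigl(\tfrac12\EXP Y^2+u_0\EXP Y\bigr)^{1/2}\le\sqrt{3/2}\,u_0$ since $\EXP Y\le(\EXP Y^2)^{1/2}=u_0$, while the first is $\sqrt{\log(1+R/u_0)}\le\sqrt{\log(1+1/\sqrt{\kappa})}\le c\sqrt{\log(1/\kappa)}$ using $R\le u_0/\sqrt{\kappa}$. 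This gives the $\sqrt{\log(1/\kappa)}$ dependence. (As an aside, even the correct proof requires $\kappa$ bounded away from $1$, an implicit assumption that is harmless in the paper's usage.)
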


Combining Lemma \ref{lemma:basic-parameters} with Theorem
\ref{thm:main-integral} leads to the following
corollary:

\begin{corollary} \label{thm:main-single-integral}
There are absolute constants $c_1,\ldots,c_4$ for which the following
holds.
Assume the conditions of Theorem \ref{thm:main-integral}.
Set $c_1\frac{\log N}{N} \leq \Delta <100$, and $\theta = c_2 \Delta$ with
$c_2>7$. Then
\[
\left|\frac{1}{N}\sum_{i \in [N] \backslash J_+} (Z_i)_+^p - \E Z_+^p \right| \leq  c_3 \sqrt{\Delta \log\left(\frac{1}{\Delta}\right)} \left(\EXP Z^{2p}\right)^{1/2}~,
\]
and if $Z \in L_q$ for $q > 2p$ then
\[
\left|\frac{1}{N}\sum_{i \in [N] \backslash J_+} f_+^p(X_i) - \E
  f_+^p\right| \leq c_{q,p} \sqrt{\Delta}  \left(\EXP Z^q\right)^{p/q}
\]
where $c_{q,p} = c_4 p/(q-2p)$.

The analogous inequalities hold for $Z_-$ with $J_-$ replacing $J_+$.
\end{corollary}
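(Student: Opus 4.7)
The plan is to combine Theorem~\ref{thm:main-integral}(b) with the three moment-tail estimates from Lemma~\ref{lemma:basic-parameters}, using only the fact that the choice $\theta=c_2\Delta$ forces both quantile levels $\theta_1=2\theta+8\Delta$ and $\theta_2=(2\theta-8\Delta)/3$ to be of order $\Delta$. Since we are told to assume the conditions of Theorem~\ref{thm:main-integral}, we may work directly on the event $\mathcal{A}$; no probabilistic argument is needed at this stage.

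First I would invoke Theorem~\ref{thm:main-integral}(b) to sandwich $\frac{1}{N}\sum_{i\in [N]\setminus J_+}(Z_i)_+^p$ between $\E Z_+^p-3\E[Z^p\IND_{\{Z\ge Q_{1-\theta_1}\}}]-\mathcal{E}_{Q_{1-\theta_1},p}$ and $\E Z_+^p+\mathcal{E}_{Q_{1-\theta_2},p}$. Thus the absolute deviation from $\E Z_+^p$ is controlled by the maximum of the tail-mass term $\E[Z^p\IND_{\{Z\ge Q_{1-\theta_1}\}}]$ and the two truncated integrals $\mathcal{E}_{Q_{1-\theta_i},p}$ (and their analogues arise similarly for $(Z_i)_-^p$ via part~(c)).

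Next I would apply Lemma~\ref{lemma:basic-parameters} to the nonnegative variable $Z_+$ (which, being a mixture of an absolutely continuous part and possibly an atom at $0$, falls within the lemma's scope). The choice $\theta=c_2\Delta$ with $c_2>7$ and $\Delta\ge 7\Delta/c_2$ ensures $\theta_1,\theta_2\in[c\Delta,C\Delta]$ for absolute constants. Consequently $\E[Z^p\IND_{\{Z\ge Q_{1-\theta_1}\}}]\le\sqrt{\theta_1}(\E Z^{2p})^{1/2}\lesssim\sqrt{\Delta}(\E Z^{2p})^{1/2}$ and $\mathcal{E}_{Q_{1-\theta_i},p}\lesssim\sqrt{\Delta\log(1/\theta_i)}(\E Z^{2p})^{1/2}\lesssim\sqrt{\Delta\log(1/\Delta)}(\E Z^{2p})^{1/2}$. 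Since the $\sqrt{\log}$-factor dominates, combining the three bounds yields the first inequality of the corollary.

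For the $L_q$-version ($q>2p$), I would keep the $\mathcal{E}$-terms handled by the third display of Lemma~\ref{lemma:basic-parameters}, which directly gives $\mathcal{E}_{Q_{1-\theta_i},p}\le c_{q,p}\sqrt{\Delta}(\E Z^q)^{p/q}$ with $c_{q,p}\asymp p/(q-2p)$, and then bound the tail-mass term via Hölder with conjugate exponents $q/p$ and $q/(q-p)$: $\E[Z^p\IND_{\{Z\ge Q_{1-\theta_1}\}}]\le(\E Z^q)^{p/q}\theta_1^{1-p/q}$, and $1-p/q>1/2$ so this is $\lesssim\sqrt{\Delta}(\E Z^q)^{p/q}$ as well. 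The analogous statements for $(Z_i)_-^p$ follow from part~(c) of Theorem~\ref{thm:main-integral} by symmetry, i.e.\ applying the same chain of estimates to $-Z$. I do not anticipate a real obstacle here, since the argument is essentially bookkeeping; the only point requiring a little care is checking that $\theta_1,\theta_2$ are both comparable to $\Delta$ so that the bounds from Lemma~\ref{lemma:basic-parameters} at quantile level $\kappa=\theta_i$ transfer cleanly to the target rate in $\Delta$.
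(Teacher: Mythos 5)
Your proposal is correct and follows exactly the route the paper intends: sandwich $\frac{1}{N}\sum_{i\in[N]\setminus J_+}(Z_i)_+^p$ via Theorem~\ref{thm:main-integral}(b), note that the choice $\theta=c_2\Delta$ makes both $\theta_1=(2c_2+8)\Delta$ and $\theta_2=(2c_2-8)\Delta/3$ proportional to $\Delta$, then apply Lemma~\ref{lemma:basic-parameters} at quantile level $\kappa\asymp\Delta$ to each of the two ${\cal E}$-terms and the tail-mass term. For the $L_q$ bound your direct H\"older estimate $\E[Z^p\IND_{\{Z\ge Q_{1-\theta_1}\}}]\le(\E Z^q)^{p/q}\theta_1^{1-p/q}\le(\E Z^q)^{p/q}\sqrt{\theta_1}$ (using $q>2p$) is a natural and valid substitute for the route via the first display of Lemma~\ref{lemma:basic-parameters} combined with the monotonicity of $L^r$ norms ($(\E Z^{2p})^{1/2}\le(\E Z^q)^{p/q}$); both yield the same order. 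The one point worth flagging is that the tail-mass contribution carries an absolute constant (not one proportional to $p/(q-2p)$); this is consistent with the corollary only if $c_{q,p}$ is read as $\gtrsim 1$, i.e.\ as $c_4\bigl(1+p/(q-2p)\bigr)$ rather than literally $c_4\,p/(q-2p)$ --- a small imprecision in the paper's statement rather than a gap in your argument.
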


\subsection{Proof of Theorem \ref{thm:main-integral}}
\label{sec:proofsoftailintegration}

The proof is a minor modification of some arguments of Mendelson
\cite{Men20}.

The proof of Theorem \ref{thm:main-integral} requires a few preliminary steps. First, observe that $\theta_2 \geq 2\Delta$ and therefore
\begin{equation} \label{eq:observation1}
\text{for all} \ t \leq Q_{1-\theta_2}~, \quad \PROB\{ Z>t\} \geq 2\Delta~.
\end{equation}
\eqref{eq:observation1} implies that all the level sets $\{Z>t\}$ for
$0<t \leq Q_{1-\theta_2}$ satisfy property $(1)$ of Definition
\ref{def:ratioconditions} with $j=1$,  a fact used frequently in what follows.

Another useful observation is that by Property $(3)$,
$$
\PROB\{Z>0\} \geq \gamma = 4\theta+16\Delta =2\theta_1~,
$$
and therefore,
the $(1-\theta_1)$-quantile of $Z_+$ coincides with the
$(1-\theta_1)$-quantile of $Z$ and the $(1-\theta_1)$-quantile of $Z_-$
equals $- Q_{\theta_1}$.

Moreover, by property $(1)$ (with $j=0$) and the choice of $\theta$ we have
$$
\PROB_N\{Z>0\} \geq \frac{1}{2}\PROB\{Z>0\} \geq  \frac{\theta_1}{2} > \theta
$$
and an identical argument shows that $\PROB_N\{Z<0\} > \theta$.
Now let $(Z_i^\sharp)_{i=1}^N$ be the monotone nonincreasing
rearrangement of $(Z_i)_{i=1}^N$.
Set $\wh{Q}_+ = Z_{\theta N}^\sharp$ and $\wh{Q}_- = Z_{(1-\theta)  N}^\sharp$.
Clearly,  $\wh{Q}_+>0$ and for  $i\in J_+$,  $Z_i=(Z_i)_+$. The
analogous statement holds for $\wh{Q}_-$ and $J_-$, proving part (a) of Theorem \ref{thm:main-integral}.

\begin{lemma} \label{lemma:est-on-hat-Q}
On the event ${\cal A}$ defined in Definition \ref{def:ratioconditions},
$$
Q_{1-\theta_1} < \wh{Q}_+ < Q_{1-\theta_2} \ \ {\rm and} \ \ -Q_{\theta_1} < \wh{Q}_- < - Q_{\theta_2}~.
$$
\end{lemma}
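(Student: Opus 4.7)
The plan is a direct contradiction argument at each of the four endpoints, using property~$(1)$ of Definition~\ref{def:ratioconditions} with $j=0$, together with the defining property of $\wh{Q}_+$ as the $\theta N$-th largest sample value. Throughout I use that $Z$ has a continuous distribution, so with probability one no $Z_i$ equals the deterministic numbers $Q_{1-\theta_1}$ or $Q_{1-\theta_2}$, and $\wh{Q}_+$ itself differs from them as well.

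For the upper bound $\wh{Q}_+ < Q_{1-\theta_2}$: since $\theta_2=(2\theta-8\Delta)/3\ge 2\Delta\ge\Delta$, property~$(1)$ with $j=0$ applied at $t=Q_{1-\theta_2}$ gives
\[
\PROB_N\{Z>Q_{1-\theta_2}\}\le \tfrac{3}{2}\theta_2 = \theta - 4\Delta < \theta .
\]
If instead $\wh{Q}_+\ge Q_{1-\theta_2}$, the $\theta N$ sample points with $Z_i\ge \wh{Q}_+$ would all strictly exceed $Q_{1-\theta_2}$, forcing $\PROB_N\{Z>Q_{1-\theta_2}\}\ge\theta$, a contradiction.

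For the lower bound $\wh{Q}_+ > Q_{1-\theta_1}$: since $\theta_1=2\theta+8\Delta\ge\Delta$, property~$(1)$ with $j=0$ at $t=Q_{1-\theta_1}$ yields
\[
\PROB_N\{Z>Q_{1-\theta_1}\}\ge \tfrac{1}{2}\theta_1 = \theta + 4\Delta > \theta .
\]
On the other hand, $\wh{Q}_+$ is itself one of the $Z_i$, so exactly $\theta N - 1$ sample points lie strictly above $\wh{Q}_+$, whence $\PROB_N\{Z>\wh{Q}_+\} = \theta - 1/N < \theta$. Assuming $\wh{Q}_+\le Q_{1-\theta_1}$ would therefore yield $\PROB_N\{Z>Q_{1-\theta_1\}}\le\PROB_N\{Z>\wh{Q}_+\}<\theta$, contradicting the previous display.

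The two bounds on $\wh{Q}_-$ are obtained by the identical pair of arguments applied to $-Z$: the lower-tail half of property~$(1)$ provides exactly the multiplicative control of $\PROB_N\{Z<-t\}$ required, and the role of $\wh{Q}_+$ is played by $-\wh{Q}_-$. The only real obstacle is bookkeeping, namely verifying that both $\theta_1$ and $\theta_2$ exceed $\Delta$ so that property~$(1)$ is applicable at $j=0$; this is precisely what the assumption $\theta\ge 7\Delta$ in Theorem~\ref{thm:main-integral} ensures, and it is also what produces the crucial slack of $4\Delta$ on each side in the two displays above.
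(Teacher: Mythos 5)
Your proof is correct in its computations and reaches the right conclusion, but by a genuinely different route from the paper's. The paper starts from $\PROB_N\{Z \ge \wh{Q}_+\} = \theta$, invokes property $(2)$ for the random interval $[\wh{Q}_+,\infty)$ to show $\PROB\{Z > \wh{Q}_+\} \ge \frac{2}{3}(\theta - 2\Delta) \ge 2\Delta$, and then applies property $(1)$ at the \emph{random} threshold $t=\wh{Q}_+$ to sandwich $\PROB\{Z > \wh{Q}_+\}$ between $\theta_2$ and $\theta_1$. You instead apply property $(1)$ at the two \emph{deterministic} thresholds $Q_{1-\theta_1}$ and $Q_{1-\theta_2}$, obtain bounds on the empirical frequencies $\PROB_N\{Z > Q_{1-\theta_i}\}$, and derive a contradiction from the defining identity $\PROB_N\{Z\ge\wh{Q}_+\}=\theta$. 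Your route avoids property $(2)$ entirely, which makes it slightly leaner. One small but real omission: property $(1)$ is stated only for $t\ge 0$, so you must verify $Q_{1-\theta_1}\ge 0$ and $Q_{1-\theta_2}\ge 0$ before applying it at those points. This is not automatic; it follows from property $(3)$, which gives $\PROB\{Z>0\}\ge 4\theta+16\Delta=2\theta_1>\theta_1>\theta_2$, hence $Q_{1-\theta_1},Q_{1-\theta_2}>0$. (The paper records exactly this observation in the paragraph immediately preceding Lemma \ref{lemma:est-on-hat-Q}.) Your closing remark about ``bookkeeping'' mentions only that $\theta_1,\theta_2\ge\Delta$; add the positivity of the two quantiles and the argument is complete.
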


\proof We present a proof of the first claim. The proof of the second one is identical and is omitted.
By definition,
$\PROB_N \{Z \geq \wh{Q}_+\} = \theta$.
Applying property $(2)$  in Definition \ref{def:ratioconditions} for $I=[\wh{Q}_+,\infty)$,
$$
\theta \leq \PROB_N\{ Z \geq \wh{Q}_+\} \leq \frac{3}{2}\PROB\{ Z \geq \wh{Q}\} + 2\Delta~.
$$
In particular,
$$
\PROB\{ Z > \wh{Q}_+\}=\PROB\{Z \geq \wh{Q}_+\} \geq \frac{2}{3}\left(\theta - 2\Delta\right) \geq \Delta
$$
provided that $\theta \geq 7\Delta/2$, as was assumed. Hence, we may
use property $(1)$ of Definition \ref{def:ratioconditions} with $j=1$
and $t= \wh{Q}_+$, to get
\[
\left|\frac{\PROB_N\{ Z > \wh{Q}_+\}}{\PROB\{Z  > \wh{Q}_+\}}-1 \right| \leq \frac{1}{2}~.
\]
This implies
$$
\PROB\{ Z > \wh{Q}_+\} \leq 2 \PROB_N\{ Z > \wh{Q}_+\} \leq 2\theta < \theta_1
$$
and
$$
\PROB\{ Z > \wh{Q}_+\}  \geq \frac{2\PROB_N\{Z > \wh{Q}_+\}}{3} =
\frac{2(\theta-1/N)}{3} \ge \theta_2~,
$$
and therefore
\[
Q_{1-\theta_1} < \wh{Q}_+ < Q_{1-\theta_2}~,
\]
as claimed.
\endproof

\begin{lemma}
\label{lemma:int-preliminary}
Consider the conditions of Theorem \ref{thm:main-integral}. Then, on
the event ${\cal A}$,
\[
\int_0^{Q_{1-\theta_1}} pt^{p-1}\PROB_N\{Z>t\} dt - \theta \wh{Q}_+^p
\leq \frac{1}{N}\sum_{j \in [N] \backslash J_+} (Z_i)_+^p
\leq \int_0^{Q_{1-\theta_2}} pt^{p-1}\PROB_N\{Z >t\} dt~.
\]
A similar estimate holds for $Z_-$.
\end{lemma}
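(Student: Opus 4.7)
The plan is to rewrite the trimmed sum as a layer-cake integral in the empirical measure $\PROB_N$, compute the resulting integral exactly in terms of $\widehat{Q}_+$, and then compare $\widehat{Q}_+$ to the deterministic quantiles $Q_{1-\theta_1}$ and $Q_{1-\theta_2}$ using Lemma \ref{lemma:est-on-hat-Q}. First, by part (a) of Theorem \ref{thm:main-integral} every $i \in J_+$ satisfies $Z_i \geq \widehat{Q}_+ > 0$, and by definition of $\widehat{Q}_+$ every $i \in [N] \setminus J_+$ satisfies $Z_i < \widehat{Q}_+$. Consequently $(Z_i)_+ < \widehat{Q}_+$ for every $i \notin J_+$, so the layer-cake formula gives
\[
(Z_i)_+^p = \int_0^{\widehat{Q}_+} p\, t^{p-1} \IND_{\{Z_i > t\}}\, dt \qquad (i \in [N] \setminus J_+).
\]

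Next I would sum over $i \in [N] \setminus J_+$ and exchange the sum and the integral. For every $t \in (0, \widehat{Q}_+)$, each index in $J_+$ contributes $\IND_{\{Z_i > t\}} = 1$ because $Z_i \geq \widehat{Q}_+ > t$, so
\[
\frac{1}{N} \sum_{i \notin J_+} \IND_{\{Z_i > t\}} = \PROB_N\{Z > t\} - \theta.
\]
Plugging this in and using $\int_0^{\widehat{Q}_+} p\, t^{p-1}\, dt = \widehat{Q}_+^p$ yields the identity
\[
\frac{1}{N} \sum_{i \in [N]\setminus J_+} (Z_i)_+^p = \int_0^{\widehat{Q}_+} p\, t^{p-1}\, \PROB_N\{Z>t\}\, dt \;-\; \theta\, \widehat{Q}_+^p.
\]

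Finally, I would invoke Lemma \ref{lemma:est-on-hat-Q}, which asserts that on ${\cal A}$ we have $Q_{1-\theta_1} < \widehat{Q}_+ < Q_{1-\theta_2}$. For the upper bound in Lemma \ref{lemma:int-preliminary}, enlarge the upper limit of integration from $\widehat{Q}_+$ to $Q_{1-\theta_2}$ (the integrand is nonnegative) and discard the nonpositive term $-\theta \widehat{Q}_+^p$. For the lower bound, shrink the upper limit from $\widehat{Q}_+$ down to $Q_{1-\theta_1}$ (again using nonnegativity) and keep $-\theta \widehat{Q}_+^p$ as is. This produces exactly the asserted inequality. The claim for $Z_-$ follows by applying the identical argument to $-Z$: the roles of $J_+$ and $J_-$ swap, $\widehat{Q}_+$ is replaced by $-\widehat{Q}_-$, and property (3) of Definition \ref{def:ratioconditions}, which was only used through the symmetric statement $\PROB\{Z<0\} \geq \eta$, applies unchanged.

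No step of this argument looks genuinely hard; the only point that needs care is the bookkeeping in the layer-cake identity, in particular the observation that $\IND_{\{Z_i > t\}} \equiv 1$ on $J_+$ throughout $t \in (0, \widehat{Q}_+)$, which is what produces the clean $-\theta\, \widehat{Q}_+^p$ correction. The real content of the lemma is already contained in Lemma \ref{lemma:est-on-hat-Q}, so once that is in hand the remainder is essentially algebraic.
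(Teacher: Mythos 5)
Your proof is correct and takes essentially the same route as the paper: express the trimmed sum as a layer-cake integral over $[0,\wh{Q}_+]$, then replace $\wh{Q}_+$ by the deterministic quantiles $Q_{1-\theta_1}$ and $Q_{1-\theta_2}$ via Lemma~\ref{lemma:est-on-hat-Q}. Your bookkeeping is slightly cleaner than the paper's: you derive the exact identity $\frac{1}{N}\sum_{i\in[N]\setminus J_+}(Z_i)_+^p = \int_0^{\wh{Q}_+}pt^{p-1}\PROB_N\{Z>t\}\,dt - \theta\wh{Q}_+^p$, whereas the paper passes through a two-sided sandwich around $\frac{1}{N}\sum_{i}(Z_i)_+^p\IND_{\{(Z_i)_+\le\wh{Q}_+\}}$ together with a tail-integration equality that, as printed, is off from that sum by the term $(\theta-1/N)\wh{Q}_+^p$ --- a harmless slip that your version sidesteps entirely.
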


\proof Recall that $\wh{Q}_+ = Z_{\theta N}^\sharp$, and therefore,
\[
\frac{1}{N}\sum_{i=1}^N (Z_i)_+^p\IND_{\{(Z_i)_+ \leq \wh{Q}_+\}} - \theta \wh{Q}_+^p
\leq \frac{1}{N}\sum_{i \in [N] \backslash J_+} (Z_i)_+^p
\leq \frac{1}{N}\sum_{i=1}^N (Z_i)_+^p\IND_{\{(Z_i)_+ \leq \wh{Q}_+\}}~.
\]
By tail integration,
\[
\frac{1}{N}\sum_{i=1}^N (Z_i)_+^p\IND_{\{(Z_i)_+ \leq \wh{Q}_+\}}
= \int_0^\infty pt^{p-1} \PROB_N \left\{ (Z)_+^p\IND_{\{(Z)_+ \leq
    \wh{Q}_+\}}  > t\right\} dt
= \int_0^{\wh{Q}_+} pt^{p-1}\PROB_N \{ Z > t\} dt~.
\]
Since by Lemma \ref{lemma:est-on-hat-Q}, on the event ${\cal A}$,  $Q_{1-\theta_1} < \wh{Q}_+ <
Q_{1-\theta_2}$, the claimed inequalities follow.
\endproof

With Lemma \ref{lemma:int-preliminary} in mind, next we
may use a general estimate of Mendelson \cite{Men20} for $\int_0^T pt^{p-1} \PROB_N\{ Z>t\} dt$ that
holds as long as $T$ is such that  $\PROB\{Z>t\}$ is large enough. To formulate the claim, recall that
$$
{\cal E}_{T,p} = 2 \sqrt{\Delta}\int_0^T pt^{p-1} \sqrt{\PROB\{|Z|>t\}}dt~.
$$

\begin{lemma} \label{lemma:integral-1}
(Mendelson \cite{Men20}.)
Let $T$ be such that $\PROB\{ Z >T\} \geq \Delta$. On the event
${\cal  A}$ of Definition \ref{def:ratioconditions}, we have
\[
\E \left[ Z_+^p\IND_{\{Z_+ \leq T\}} \right] - {\cal E}_{T,p} \leq
\int_0^T pt^{p-1} \PROB_N\{Z >t\} dt
\leq \E Z_+^p + {\cal E}_{T,p}~.
\]
\end{lemma}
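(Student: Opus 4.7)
My plan is to prove the sandwich inequality by decomposing the empirical integral as a deterministic part plus an error term, and then bounding the error using property (1) of Definition \ref{def:ratioconditions} pointwise in $t$. The key step is to write
\[
\int_0^T pt^{p-1} \PROB_N\{Z>t\}\,dt = \int_0^T pt^{p-1}\PROB\{Z>t\}\,dt + \int_0^T pt^{p-1}\bigl(\PROB_N\{Z>t\}-\PROB\{Z>t\}\bigr)\,dt~.
\]
By tail integration applied to the nonnegative random variable $Z_+$, the deterministic piece $\int_0^T pt^{p-1}\PROB\{Z>t\}\,dt$ equals $\E[Z_+^p \wedge T^p]$, and therefore is sandwiched between $\E[Z_+^p\IND_{\{Z_+\leq T\}}]$ and $\E Z_+^p$.

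Next I would bound the error integral pointwise. Fix $t \in (0,T]$; since $\PROB\{Z>t\}$ is nonincreasing in $t$ and $\PROB\{Z>T\}\geq \Delta$, we have $\PROB\{Z>t\}\geq \Delta$, so there exists a unique integer $j\geq 0$ with $2^{j}\Delta \leq \PROB\{Z>t\} < 2^{j+1}\Delta$. Property (1) in Definition \ref{def:ratioconditions} applies at this $j$ and gives
\[
\bigl|\PROB_N\{Z>t\}-\PROB\{Z>t\}\bigr| \leq 2^{-j/2-1}\PROB\{Z>t\}~.
\]
Using $\PROB\{Z>t\}<2^{j+1}\Delta$, one obtains $2^{-j/2-1}\PROB\{Z>t\} \leq \sqrt{\Delta/2}\cdot\sqrt{\PROB\{Z>t\}} \leq \sqrt{\Delta}\,\sqrt{\PROB\{|Z|>t\}}$, so the pointwise error is controlled by $\sqrt{\Delta}\,\sqrt{\PROB\{|Z|>t\}}$ on all of $(0,T]$.

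Integrating this pointwise bound against $pt^{p-1}$ on $[0,T]$ yields
\[
\Bigl|\int_0^T pt^{p-1}\bigl(\PROB_N\{Z>t\}-\PROB\{Z>t\}\bigr)\,dt\Bigr| \leq \sqrt{\Delta}\int_0^T pt^{p-1}\sqrt{\PROB\{|Z|>t\}}\,dt = \tfrac{1}{2}\mathcal{E}_{T,p}~.
\]
Combining with the sandwich $\E[Z_+^p\IND_{\{Z_+\leq T\}}] \leq \int_0^T pt^{p-1}\PROB\{Z>t\}\,dt \leq \E Z_+^p$ gives both desired inequalities (with room to spare on the constant).

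The main obstacle is arranging property (1) to yield the square-root-of-probability gain uniformly across all scales of $\PROB\{Z>t\}$ for $t \in (0,T]$; the dyadic decomposition $\PROB\{Z>t\}\in [2^j\Delta,2^{j+1}\Delta)$ is exactly what lets the $2^{-j/2}$ decay in property (1) compensate the growth of the level sets, producing the sharp $\sqrt{\PROB\{|Z|>t\}}$ factor that appears in $\mathcal{E}_{T,p}$. All other steps (tail integration, linearity) are routine once this pointwise estimate is in place.
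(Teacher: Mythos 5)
Your proof is correct, and indeed proves the slightly stronger inequality with $\tfrac{1}{2}\mathcal{E}_{T,p}$ in place of $\mathcal{E}_{T,p}$. Since the paper does not include a proof of this lemma (it is attributed to \cite{Men20}), there is no in-text proof to compare against, but your argument — decomposing into the deterministic tail integral plus an error, identifying the deterministic part with $\E[(Z_+\wedge T)^p]$ by tail integration, and then using the dyadic level sets $\PROB\{Z>t\}\in[2^j\Delta,2^{j+1}\Delta)$ so that the $2^{-j/2-1}$ decay of property $(1)$ converts the relative error into the absolute bound $\sqrt{\Delta/2}\,\sqrt{\PROB\{Z>t\}}$ — is exactly the natural mechanism the paper's framework is built around, and I have no reason to think the cited proof differs in substance.
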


Now we are ready to prove Theorem \ref{thm:main-integral}.

\noindent{\bf Proof of Theorem \ref{thm:main-integral}.} \
Assume that the event ${\cal A}$ holds.
Apply Lemma \ref{lemma:integral-1} with $T= Q_{1-\theta_1}$ and $T=
Q_{1-\theta_2}$. Both valid choices, as
\[
\PROB\{Z > Q_{1-\theta_1} \} \geq \PROB\{ Z > Q_{1-\theta_2} \} \geq \Delta~.
\]
Thus,
\[
\int_0^{Q_{1-\theta_2}} pt^{p-1} \PROB_N \{Z  >t\}  dt \leq \E Z_+^p + {\cal E}_{Q_{1-\theta_2},p}~,
\]
and
\begin{eqnarray*}
\int_0^{Q_{1-\theta_1}}  pt^{p-1}\PROB_N \{Z >t\} dt
& \geq & \E \left[ Z_+^p \IND_{\{Z \leq Q_{1-\theta_1}\}} \right] - {\cal E}_{Q_{1-\theta_1},p}
\\
& = & \E Z_+^p - \left( \E \left[ Z^p \IND_{\{Z > Q_{1-\theta_1}\}} \right]
         +{\cal E}_{Q_{1-\theta_1},p}\right)~.
\end{eqnarray*}

It remains to show that
\begin{equation} \label{eq:hat-Q-tail}
\wh{Q}_+^p \theta \leq 2 \E \left[ Z^p\IND_{\{Z \geq
    Q_{1-\theta_1}\}} \right]~,
\end{equation}
which, by Lemma \ref{lemma:int-preliminary} completes the proof.
To that end, recall that $\PROB\{Z >Q_{1-\theta_2}\} \geq \Delta$ and
that, by Lemma \ref{lemma:est-on-hat-Q},
$Q_{1-\theta_1} < \wh{Q}_+ < Q_{1-\theta_2}$. Thus,
\[
\E \left[ Z^p \IND_{\{Z \geq \wh{Q}_+\}} | Z_1,\ldots,Z_N\right]  \leq
\E \left[ Z_+^p \IND_{\{Z \geq Q_{1-\theta_1}\}} \right].
\]
Also, since $\PROB_N\{Z \geq \wh{Q}_+\} \geq \theta$ and
$$
\PROB\{Z \geq \wh{Q}_+ |Z_1,\ldots,Z_N\}  \geq \PROB\{Z  \geq Q_{1-\theta_2}\} \geq \Delta~,
$$
it follows from property $(1)$ in Definition \ref{def:ratioconditions}
(taking $j=0$) that
\[
\E \left[ Z^p\IND_{\{Z \geq \wh{Q}_+\}} |Z_1,\ldots,Z_N\right]
\geq \wh{Q}_+^p \PROB\{Z \geq \wh{Q}_+ |Z_1,\ldots,Z_N\}
\geq \wh{Q}_+^p 2 \PROB_N\{Z \geq \wh{Q}_+\} = 2 \wh{Q}_+^p \theta~,
\]
proving \eqref{eq:hat-Q-tail}.

A similar estimate holds for $Z_-$.
\endproof

\section{Uniform relative deviations of empirical measures}
\label{sec:uniform}

In this section we present inequalities for uniform relative
deviations of empirical measures. These are the main technical
novelties of the article that allow us to construct covariance and mean
estimators with the desired properties. More precisely, we prove that
properties (1) and (2) of Definition \ref{def:ratioconditions} hold
\emph{uniformly} over a class of random variables (with high probability) above a certain
critical level depending on the class.

To properly set up the main result of this section, we need a few
definitions. Since the results may be of independent interest, we
present it in a greater generality than what is needed for our
purposes in this article.

Let $\X$ be a measurable set and let $X,X_1,\ldots,X_N$ be
independent, identically distributed random variables taking values in $\X$.

Consider a class $\F$ of real-valued functions defined on $\X$. We
assume that, for all $f\in \F$, the random variable $f(X)$ has mean
zero.
We denote by $L_2$ the set of functions $f:\X\to \R$ such that
$\EXP f(X)^2 < \infty$. We write $\|f\|_{L_2} = \left(\EXP
  f(X)^2\right)^{1/2}$ for all $f\in L_2$ and denote the unit sphere
and the unit ball in $L_2$ by $S(L_2)= \{f\in L_2: \|f\|_{L_2}=1\}$
and $D= \{f\in L_2: \|f\|_{L_2} \le 1\}$, respectively.

Recall that for an indicator function $h(x)=\IND_{x\in A}$ for some $A \subset
\X$, we abbreviate
\[
    \PROB\{ h\}= \PROB\{ X \in A \} \quad \text{and} \quad
    \PROB_N\{ h\} = \frac{1}{N} \sum_{i=1}^N \IND_{X_i \in A}~.
\]

 We assume that
$0\in \F$ (i.e., $\F$ contains the function that equals zero
everywhere) and that $\F$ is star-shaped around $0$, that is,
if $f\in \F$ then $cf \in \F$ for all $c\in [0,1]$.

For $\epsilon >0$, denote by ${\cal M}(\F,\epsilon)$ the packing
number of $\F$, that is, the size of the largest $\epsilon$-net in
$\F$ (i.e., a subset whose elements have pairwise distance at least $\epsilon$).

\begin{definition}
\label{def:criticallevel}
For $\Delta,c,L>0$, let
$\ol{\rho}_N=\ol{\rho}_N(c,\Delta,L)$ denote the infimum of all those values $r>0$
such that
\begin{align*}
(1) \ & \ \log {\cal M}\left(\F \cap r S(L_2),  \frac{\Delta^{3/2}
      r}{1000 \cdot L} \right) \leq c N \Delta~, \quad \text{ and}   \\
(2) \ &  \ \E \sup_{u \in (\F-\F) \cap \Delta^{3/2} r D}
  \left|\frac{1}{N} \sum_{i=1}^N \eps_i u(X_i) \right| \leq
  \frac{\Delta^2 r}{40000 \cdot L}  ~,
\end{align*}
where $\epsilon_1,\ldots,\epsilon_N$ are independent symmetric
Bernoulli random variables with $\PROB\{\epsilon_i=1\}
=\PROB\{\epsilon_i=- 1\} =1/2$ and $\F-\F=\{f=g-h: g,h\in \F\}$.
We call $\ol{\rho}_N$ the \emph{critical level} of the class $\F$.
\end{definition}

Note that since $\F$ is star-shaped, inequalities $(1)$ and $(2)$ in
Definition \ref{def:criticallevel}  are satisfied for all $r>
\ol{\rho}_N$.
The constants $1/1000$ and $1/40000$ do not have any special role. Their
values have not been optimized and they are chosen by convenience.

The main result of this section is that there exists a positive
numerical constant $c$ such that, above the critical level
$\ol{\rho}_N(c,\Delta,L)$,
functions in $\F$ simultaneously satisfy properties $(1)$ and $(2)$ of
Definition \ref{def:ratioconditions}, given a certain condition
involving the constant  $L$.

To formulate the theorem, for $r>0$, define the classes $U_r$ and
$V_r$ of indicator functions by
\[
U_r = \left\{ \IND_{\{ f \in I\}} : I \ \text{is an interval,} \ f \in
  \F, \ \|f\|_{L_2} \geq r \right\}
\]
and
\[
V_r = \left\{ \IND_{\{ h > t\}} : t >0, \ h \in \F \cup (-\F), \ \|h\|_{L_2} \geq r\right\}~.
\]
The main result of this section is the following:

\begin{tcolorbox}
\begin{theorem} \label{thm:uniform-properties}
Let $\F \subset L_2$ be a class of real-valued functions that is star-shaped
about $0$, such that
$\EXP f(X) =0$ for all $f\in \F$.
There exist positive numerical constants $c,c_0,c_1$ such that the
following holds.
Let $c_0\frac{\log N}{N} \leq \Delta \leq \frac{1}{2}$ and assume that functions in
$\F$ satisfy the following \emph{small-ball condition} with constants $L>0$ and $\gamma = c_1 \Delta$: for any interval $I \subset \R$,
\begin{equation} \label{eq:SB-used}
\PROB\{ f(X) \in I\} \leq \max\left\{\frac{L|I|}{\|f\|_{L_2}}, \gamma\right\}~.
\end{equation}
Suppose that $r > \ol{\rho}_N(c,\Delta,L)$. Then, with probability at least
$1-2\exp(-c_2(L) \Delta N)$, for all $u \in U_r$ and $v \in V_r$,
\begin{description}
\item{(a)} for any integer $j \ge 0$, if  $2^{-j}\PROB\{ v\} \geq \Delta$, then
$$
\left|\frac{\PROB_N \{v\}}{\PROB\{v\}} -1 \right| \leq 2^{-j/2 -1}~;
$$
\item{(b)} $\PROB_N\{u\} \leq \frac{3}{2}\PROB\{u\} + 2\Delta$~,
\end{description}
where $c_2(L)$ is a positive constant depending on $L$ only.
\end{theorem}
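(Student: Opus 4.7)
The plan is to combine a net of $\F$ at the scale prescribed by the critical level with a Bernstein bound applied on the net, and then to transfer the control to the rest of the class by a small-ball perturbation argument. The chief obstacle is that the indicators $\IND_{\{f>t\}}$ are discontinuous in $f$, so the contraction principle does not apply to them directly; this is resolved by approximating the indicator by a narrow Lipschitz ramp whose width $a$ is tuned so that the ``thickening'' error from \eqref{eq:SB-used} and the contraction-scaled Rademacher average both fall at the scale $O(\Delta)$.

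By star-shapedness, any $f\in\F$ with $\|f\|_{L_2}\ge r$ rescales to $rf/\|f\|_{L_2}\in\F\cap rS(L_2)$, and level sets transform accordingly; I would therefore reduce to working on the sphere $\mathcal{G}\defeq\F\cap rS(L_2)$ (and $-\mathcal{G}$). Let $\mathcal{N}\subset\mathcal{G}$ be a minimal $\eta$-net in $L_2$ with $\eta=\Delta^{3/2}r/(1000L)$; condition (1) of Definition \ref{def:criticallevel} gives $|\mathcal{N}|\le\exp(cN\Delta)$. For each $g\in\mathcal{N}$, discretize the threshold $s$ so that $\PROB\{g>s\}$ visits the $O(\log(1/\Delta))$ dyadic levels $2^{-j}\ge\Delta$. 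For each fixed pair $(g,s)$, Bernstein's inequality yields, with failure probability $\exp(-c'N\Delta)$, the bound $|\PROB_N\{g>s\}-\PROB\{g>s\}|\lesssim\sqrt{\Delta\PROB\{g>s\}}$, which is exactly the deviation admitted in part (a). A union bound over $\mathcal{N}$ and the threshold grid, combined with $\Delta\ge c_0\log N/N$ for $c_0$ sufficiently large, produces the required multiplicative control on the net with probability $1-\exp(-c_2(L)\Delta N)$.

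For arbitrary $g\in\mathcal{G}$, let $\pi g$ be a closest net point and put $u=g-\pi g\in(\F-\F)\cap\eta D$. For any $a>0$,
\[
\{g>s\}\,\triangle\,\{\pi g>s\}\subseteq\{g\in[s-a,s+a]\}\cup\{|u|>a\}.
\]
Choosing $a=\gamma r/L$, the small-ball bound \eqref{eq:SB-used} gives $\PROB\{g\in[s-a,s+a]\}\le\max(2La/r,\gamma)=O(\Delta)$ and Markov gives $\PROB\{|u|>a\}\le\eta^2/a^2=O(\Delta)$. For the empirical analogues: $\IND_{\{|\cdot|>a\}}$ is dominated by a $(2/a)$-Lipschitz ramp $\psi_a$, so symmetrization followed by the Ledoux--Talagrand contraction principle and condition (2) of Definition \ref{def:criticallevel} give $\sup_{u\in(\F-\F)\cap\eta D}\PROB_N\{|u|>a\}=O(\Delta)$ with high probability, while $\sup_{g\in\mathcal{N},\,s}\PROB_N\{g\in[s-a,s+a]\}=O(\Delta)$ follows from one more Bernstein application on the finite net (with $s$ discretized). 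The numerical constants $1/1000$ and $1/40000$ in the critical-level definition are chosen so that these three sources of error align at exactly the $O(\Delta)$ scale.

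Combining the on-net Bernstein estimate with the $O(\Delta)$ off-net transfer proves (a): for $v\in V_r$ with $\PROB\{v\}\in[2^j\Delta,2^{j+1}\Delta)$, the admissible deviation $2^{-j/2-1}\PROB\{v\}$ is of order $\sqrt{\Delta\PROB\{v\}}\gtrsim\Delta$, which comfortably absorbs the transfer error. For (b), I would rerun the same scheme with indicators of intervals $\IND_{\{f\in I\}}$ in place of one-sided indicators, using the two-endpoint description of $I$ to obtain $O(\log^2(1/\Delta))$ relevant events per net point; the additive $2\Delta$ in the conclusion absorbs the case $\PROB\{f\in I\}<\Delta$, where only the upper deviation is needed.
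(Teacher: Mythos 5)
Your plan reproduces the paper's proof almost step for step: net $\F\cap rS(L_2)$ at scale $\Delta^{3/2}r/(1000L)$, establish multiplicative ratio control on the net, transfer to off-net functions via the symmetric-difference decomposition into a small-ball thickening plus the tail of $|f-\pi f|$, and bound the empirical tail with a Lipschitz ramp, symmetrization, contraction, and condition~(2) of Definition~\ref{def:criticallevel}. The two places where you depart from the paper are worth flagging, because one of them is a genuine gap.

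First, for the on-net control, the paper invokes a single-function uniform-in-$t$ ratio estimate (Theorem~\ref{thm:Properties-via-VC}, applied to the class $U_f$ of level sets of a fixed $f$, which has ${\rm VC}(U_f)\le 2$), and then unions over the net. You instead apply Bernstein at a \emph{finite} grid of thresholds, namely those $s$ where $\PROB\{g>s\}$ hits the dyadic levels $2^{-j}\ge\Delta$, and then union over the grid. The difficulty is that part (a) demands the ratio estimate for \emph{every} $t$, and your transfer step (from $g$ to $\pi g$) keeps the threshold $s$ fixed, so it does not help interpolate in $t$. To pass from the dyadic grid to all $t$ you must use monotonicity of $t\mapsto \PROB_N\{g>t\}$ and $t\mapsto\PROB\{g>t\}$ to sandwich between consecutive grid points; but with a ratio-$2$ grid the sandwiching degrades the multiplicative constant by a factor of $2$, which is incompatible with the target bound $|\text{ratio}-1|\le 2^{-j/2-1}$ (a small quantity for large $j$). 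You would need a geometric grid with ratio $1+\eps$ for a suitably small $\eps$, which costs only an extra $\log(1/\Delta)/\eps$ factor in the union bound and so is fixable, but as written the dyadic grid does not yield the stated conclusion. The VC route in the paper avoids this entirely.

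Second, two smaller slips. In your symmetric-difference inclusion you place the interval condition on the arbitrary function, $\{g\in[s-a,s+a]\}$, but then propose to control its empirical measure ``on the finite net.'' To be consistent, the interval condition should sit on the net point, i.e.\ $\{g>s\}\triangle\{\pi g>s\}\subseteq\{|g-\pi g|>a\}\cup\{\pi g\in(s-a,s+a)\}$ (the paper shifts the threshold by $\delta_f$ to the same effect in Lemma~\ref{lemma:simple-obs}), so that the empirical interval probability is indeed a net quantity. Finally, to upgrade $\E\sup_{u}\PROB_N\{|u|>a\}=O(\Delta)$ (which follows from symmetrization, contraction, and condition~(2)) to a high-probability statement, you need a concentration inequality for the supremum of the bounded empirical process; the paper uses Talagrand's inequality for this in Lemma~\ref{thm:empirical-part-1}, and you should cite it explicitly rather than fold it into ``with high probability.''
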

\end{tcolorbox}

Theorem \ref{thm:uniform-properties} resembles classical results in
empirical processes theory. In fact, in certain special situations, the classes $U_r$ and
$V_r$ have a well-behaved {\sc vc} dimension  and then uniform ratio estimates are known (see, for example,
\cite{Men20} for a recent application). However, in the general case we study here, the {\sc vc}
dimensions of $U_r$ and $V_r$ can be very large or even
infinite---even when the class $\F$ itself is relatively
well-behaved. As a result, the proof of Theorem
\ref{thm:uniform-properties} calls for a different way of showing that
$U_r$ and $V_r$ are ``small". The key feature of $\F$ used in the
proof of Theorem \ref{thm:uniform-properties} is that functions in
$\F$ satisfy the small-ball condition \eqref{eq:SB-used}.

We begin by showing that any fixed function $f$ satisfies
Properties $(a)$ and $(b)$, with
high probability. This observation, while interesting on its own
right, is needed in the proof of Theorem
\ref{thm:uniform-properties}. As it happens, the required bounds for a
single function do follow from {\sc vc} theory.

\begin{definition} \label{def:vc}
Let $H$ be a class of $\{0,1\}$-valued functions on $\X$. A set $\{x_1,\ldots,x_n\}$ is shattered by $H$ if for every $I \subset [n]$ there is some $h_I \in H$ for which $h_I(x_i)=1$ if $i \in I$ and $h_I(x_i)=0$ otherwise.
The {\sc vc} dimension of $H$ is the maximal cardinality of a subset of $\X$ that is shattered by $H$. It is denoted by ${\rm VC}(H)$.
\end{definition}

We refer the reader to van der Vaart and Wellner \cite{vaWe96} for basic facts on {\sc vc} classes and on the {\sc vc}-dimension.

The connection between the {\sc vc}-dimension and properties $(a)$ and
$(b)$ for a single function is that for any function $f$, the class of indicator functions
\[
U_f = \left\{ \IND_{\{f \in I\}} : I \ \text{is \ an interval} \right\}
\]
satisfies that $VC(U_f) \leq 2$.

For classes of sets with finite {\sc vc} dimension, the following
analogue of Theorem \ref{thm:uniform-properties} was established by
Mendelson \cite{Men20}. Again, it should be stressed that in such cases, uniform ratio estimates are far simpler than in the general scenario that is needed in what follows. 

\begin{theorem} \label{thm:Properties-via-VC}
(\cite{Men20}.)
There are absolute constants $c_0$ and $c_1$ for which the following
holds.
Let $U$ be a class of functions on $\X$ taking values in $\{0,1\}$, such that $VC(U) \leq d$. Then for
\[
c_0 \frac{d}{N} \log\left(\frac{eN}{d}\right) \leq \Delta \leq \frac{1}{2}~,
\]
with probability at least $1-2\exp(-c_1 \Delta N)$, for every $u \in
U$ and nonnegative integer $j$,
\begin{description}
\item{(a')} if $\PROB\{u\} \geq 2^j \Delta$, then
$$
\left|\frac{\PROB_N\{u\}}{\PROB\{u\}} -1 \right| \leq 2^{-j/2-2}~;
$$
\item{(b')} $\PROB_N\{u\} \leq \frac{3}{2}\PROB\{u\} + 2 \Delta$~.
\end{description}
\end{theorem}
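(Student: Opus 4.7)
\textbf{Proof proposal for Theorem \ref{thm:Properties-via-VC}.} The plan is to combine a dyadic peeling with respect to $\PROB\{u\}$, a standard symmetrization/VC-covering bound on expected suprema (via Haussler's inequality), and Bousquet's version of Talagrand's concentration inequality. For each nonnegative integer $j$ with $2^j\Delta\le 1$, define the slice
\[
U_j=\bigl\{u\in U:\ 2^j\Delta\le \PROB\{u\}<2^{j+1}\Delta\bigr\}.
\]
There are at most $J\le \log_2(1/\Delta)$ nonempty slices, and to prove (a') it suffices to show that, on a single high-probability event, for every $j$,
\[
\sup_{u\in U_j}\bigl|\PROB_N\{u\}-\PROB\{u\}\bigr|\ \le\ 2^{-j/2-2}\cdot 2^j\Delta\ =\ 2^{j/2-2}\Delta.
\]

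For the expectation, symmetrization together with the fact that functions in $U_j$ have variance at most $2^{j+1}\Delta$ (they are $\{0,1\}$-valued with mean $\PROB\{u\}$) and Haussler's covering bound $\log\mathcal N(U,\eps,L_2(\PROB_N))\le Cd\log(e/\eps)$ yield, via a Dudley integral truncated at the variance,
\[
\EXP\sup_{u\in U_j}\bigl|\PROB_N\{u\}-\PROB\{u\}\bigr|\ \le\ C\sqrt{\frac{2^{j+1}\Delta\cdot d\log(eN/d)}{N}}+C\,\frac{d\log(eN/d)}{N}.
\]
Under the hypothesis $\Delta\ge c_0(d/N)\log(eN/d)$ with $c_0$ sufficiently large, both terms are bounded by $2^{j/2-4}\Delta$. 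Bousquet's inequality applied to the bounded class $U_j$ with variance $\le 2^{j+1}\Delta$ then gives, with probability at least $1-e^{-t}$,
\[
\sup_{u\in U_j}\bigl|\PROB_N\{u\}-\PROB\{u\}\bigr|\ \le\ 2\,\EXP[\,\cdot\,]+\sqrt{\frac{2\cdot 2^{j+1}\Delta\cdot t}{N}}+\frac{t}{3N}.
\]
Choosing $t=c\cdot 2^j\Delta N$ with $c$ a small absolute constant makes each of the last two terms at most $2^{j/2-4}\Delta$, so the whole supremum is bounded by $2^{j/2-2}\Delta$ as needed. A union bound over $0\le j\le J$ produces a total failure probability of $\sum_j 2e^{-c\cdot 2^j\Delta N}$, a geometric series dominated by $2e^{-c\Delta N}$, and this yields (a') on a single event.

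For (b'), apply (a') with $j=0$ to those $u$ with $\PROB\{u\}\ge \Delta$, which gives $\PROB_N\{u\}\le \frac32\PROB\{u\}$. For $u$ with $\PROB\{u\}<\Delta$, run the same Bousquet step once more on $U_{<0}=\{u\in U:\PROB\{u\}<\Delta\}$ (variance $\le\Delta$) with $t=c\Delta N$; the expectation and concentration terms are each a small multiple of $\Delta$, so $\PROB_N\{u\}\le \PROB\{u\}+C\Delta\le \frac32\PROB\{u\}+2\Delta$ after adjusting $c_0$.

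The main technical obstacle is coordinating the confidence parameter across the peeling: the theorem demands a single $\exp(-c_1\Delta N)$ bound, not $\exp(-c\cdot 2^j\Delta N)$ per slice. This works only because the sharp variance-sensitive form of Bousquet's inequality lets the confidence improve geometrically with $j$, so the union bound still costs only the worst slice ($j=0$). A secondary nuisance is that constants from Haussler's covering bound and from Bousquet feed into the minimum admissible $c_0$; one must track them carefully so that the resulting $c_0$ remains a reasonable absolute constant.
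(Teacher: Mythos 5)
The theorem is stated in the paper with a citation to \cite{Men20}; the paper does not prove it itself, so I can only assess the proposal on its own terms.

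Your framework---dyadic peeling of $U$ by the magnitude of $\PROB\{u\}$, a VC/Haussler bound on the expected supremum within each slice, and Bousquet's variance-sensitive concentration inequality---is a reasonable route to this statement, and your argument for part (b$'$) is essentially correct. However, there is a concrete error in the choice of the confidence parameter $t$ in the concentration step for part (a$'$), and your concluding paragraph, which attempts to justify that choice, is precisely where the argument breaks.

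You take $t=c\,2^{j}\Delta N$ and claim the last two terms of Bousquet's bound are each at most $2^{j/2-4}\Delta$. Plugging in the variance bound $\sigma_j^2\le 2^{j+1}\Delta$ for the slice $U_j$ gives
\[
\sqrt{\tfrac{2\cdot 2^{j+1}\Delta\cdot c\,2^{j}\Delta N}{N}} \;=\; 2^{\,j+1}\sqrt{c}\,\Delta
\qquad\text{and}\qquad
\tfrac{t}{3N}\;=\;\tfrac{c}{3}\,2^{j}\Delta~.
\]
Both grow like $2^{j}\Delta$, while the target $2^{j/2-4}\Delta$ grows only like $2^{j/2}\Delta$. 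So for a \emph{fixed} absolute constant $c$ these inequalities fail for all $j$ above some threshold; making them hold for every relevant $j\le \log_2(1/\Delta)$ would force $c\lesssim \Delta$, which is no longer a constant. In other words, the ``confidence improves geometrically with $j$'' design you rely on is incompatible with the geometrically \emph{tightening} accuracy target---the two requirements pull in opposite directions.

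The correct choice is $t=c\Delta N$, uniformly in $j$. Then the variance term is $\sqrt{2\cdot 2^{j+1}\Delta\cdot c\Delta}=2^{j/2+1}\sqrt{c}\,\Delta$, which is indeed $\le 2^{j/2-4}\Delta$ once $\sqrt{c}\le 2^{-5}$, and the linear term $c\Delta/3$ is $\le 2^{-4}\Delta\le 2^{j/2-4}\Delta$ for $c$ small. Each slice then fails with probability $e^{-c\Delta N}$, the \emph{same} for all slices, and the union bound over $J\le\log_2(1/\Delta)\le \log_2 N$ slices costs an extra factor $\log_2 N$. This is absorbed because the standing assumption $\Delta\ge c_0\frac{d}{N}\log(eN/d)$ with $d\ge 1$ gives $\Delta N\ge c_0\log(eN)$, so $\log_2(N)\,e^{-(c/2)\Delta N}\le\log_2(N)\,(eN)^{-cc_0/2}$ is bounded by an absolute constant for $c_0$ large, and one may take $c_1=c/2$. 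So the result is salvageable, but you need to replace the $j$-dependent $t$ by a $j$-independent one and pay the (harmless) $\log N$ union-bound factor, rather than having the per-slice confidence ``absorb'' the union bound.
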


In particular, when applied to the class $U_f$ whose VC dimension is
at most $2$, Theorem \ref{thm:Properties-via-VC} implies that any
function $f$ satisfies properties $(a)$ and $(b)$ of Theorem
\ref{thm:uniform-properties}. Indeed, $VC(U_f) \leq 2$ and therefore,
Theorem \ref{thm:Properties-via-VC} may be applied when $\Delta \geq
c_0\frac{\log N}{N}$ for a numerical constant $c_0>0$.

\subsection{Proof of Theorem \ref{thm:uniform-properties}}

Let us now turn to the proof of Theorem
\ref{thm:uniform-properties}. It is important to note once again
that there is no reason to expect that the corresponding classes of
indicator functions $U_r$ and $V_r$ have a well-behaved {\sc vc} dimension.

We begin with the following straightforward observation.

\begin{lemma} \label{lemma:simple-obs}
For any $f,h$, any $t \in \R$ and $\delta>0$,
\begin{equation} \label{eq:simple-obs-diff}
\left|\IND_{\{f>t\}} - \IND_{\{h>t+\delta\}}\right| \leq \IND_{\{|f-h| > \delta\}} + \IND_{\{h \in (t-\delta,t+\delta]\}}.
\end{equation}
\end{lemma}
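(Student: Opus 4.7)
The plan is a direct pointwise check. Fix an arbitrary point $x$ and observe that both indicators on the left-hand side take values in $\{0,1\}$, so the LHS lies in $\{0,1\}$ and equals $1$ precisely when exactly one of the events $\{f(x) > t\}$ and $\{h(x) > t+\delta\}$ holds. When both hold or neither holds, the LHS is $0$ and the inequality is trivial since the RHS is nonnegative; so the content is to verify the two remaining cases.

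First case: $f(x) > t$ and $h(x) \le t+\delta$. Either $|f(x)-h(x)| > \delta$, in which case the first term of the RHS is $1$ and we are done; or $|f(x)-h(x)| \le \delta$, which gives $h(x) \ge f(x) - \delta > t - \delta$. Combined with the standing assumption $h(x) \le t+\delta$, this places $h(x) \in (t-\delta,\, t+\delta]$, so the second term of the RHS is $1$. Second case: $f(x) \le t$ and $h(x) > t+\delta$. Then $h(x) - f(x) > \delta$, hence $|f(x)-h(x)| > \delta$ and the first term of the RHS is $1$.

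There is no real obstacle: the only thing one has to track carefully is the matching of strict and non-strict inequalities at the endpoints. Since $\{h > t+\delta\}^c = \{h \le t+\delta\}$ and $\{f > t\}^c = \{f \le t\}$, the half-open interval $(t-\delta,\, t+\delta]$ on the RHS is exactly what is needed to cover the boundary in the first case; no other endpoint convention would work.
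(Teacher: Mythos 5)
Your proof is correct and takes essentially the same route as the paper: a pointwise case split into the two ways the left-hand side can equal $1$, observing that the case $f(x)\le t$, $h(x)>t+\delta$ forces $|f-h|>\delta$, while the other case with $|f-h|\le\delta$ forces $h(x)\in(t-\delta,t+\delta]$. The paper phrases it slightly more compactly by first assuming $|f-h|\le\delta$ and dispatching both alternatives at once, but the content is identical.
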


\proof If $\IND_{\{f>t\}}(x)-\IND_{\{h>t+\delta\}}(x) \not= 0$ and $|h-f|(x) \leq \delta$ then one of the two alternatives holds: either $f(x)>t$ and $h(x) \leq t+\delta$, or $f(x) \leq t$ and $h(x)>t+\delta$. The former implies that $h(x) \in (t-\delta,t+\delta]$, while the latter is impossible.
\endproof

Thanks to Lemma \ref{lemma:simple-obs}, it is possible to estimate
\[
\left|\PROB\{f>t\}-\PROB\{h>t+\delta\}\right| \ \ {\rm and} \ \ \left|\PROB_N\{f>t\}-\PROB_N\{h>t+\delta\}\right|
\]
by a combination of a tail estimate for $|f-h|$ and a small-ball estimate for $|h|$ --- first with respect to the underlying measure $\PROB$ and then with respect to the empirical measure $\PROB_N$.


Fix $\Delta> c_0\log N/N$ for the numerical constant $c_0$ mentioned
in the paragraph following Theorem \ref{thm:Properties-via-VC}.
Let $c>0$ be a constant to be specified later
and  let $r> \ol{\rho}_N(c,\Delta,L)$ be above the critical level for $\F$.
Let $j$ be a nonnegative integer. By property (1) in Definition
\ref{def:criticallevel}, a maximal $\Delta^{3/2}r /(1000 L)$-net of the
subset of $\F$ consisting
of functions with $\|f\|_{L_2}=r$
has cardinality at most $\exp(cN\Delta)$. Let $H_r$ be such an $\Delta^{3/2}r /(1000 L)$-net.

If $c \le c_1/2$ for the constant $c_1$ appearing in Theorem
\ref{thm:Properties-via-VC}, then
by the ratio estimate for a single function (Theorem
\ref{thm:Properties-via-VC}) and the union bound, with probability at least $1-2\exp(-c' N \Delta)$,
\begin{equation}
\label{eq:onthenet}
\sup_{h \in H_r} \sup_{t: \PROB\{h>t\} \geq 2^{j-1}\Delta} \left|\frac{\PROB_N\{h>t\}}{\PROB\{h>t\}}-1\right| \leq \frac{2^{-{(j-1)/2}}}{4}~,
\end{equation}
where we may take $c'=c_1/2$.

For
$f \in \F \cap r S(L_2)$, let $\pi f \in H_r$ be the best
approximation to $f$ in the net $H_r$ with respect to the $L_2$ norm.
Then, for any $t \in \R$ and $\delta>0$, on the same event where
\eqref{eq:onthenet} holds, we have
\begin{equation} \label{eq:ratio-in-proof-1}
\left|\frac{\PROB_N\{f >t\}}{\PROB\{f>t\}} -1\right|
   \leq \left|\frac{\PROB_N\{f >t\}}{\PROB\{f>t\}} -
     \frac{\PROB_N\{\pi f >t+\delta\}}{\PROB\{\pi f >t+\delta\}} \right| +\frac{2^{-{(j-1)/2}}}{4}~,
\end{equation}
provided that $\PROB\{ \pi f >t+\delta\} \geq 2^{(j-1)}\Delta$.

Note that in the inequality above, we may choose the value of
$\delta$ at will, even depending on $f$.
For each $f\in \F$, define $\delta_f  = \Delta\|f\|_{L_2}/(100\cdot L)$

The next lemma shows that with this choice of $\delta_f$, one indeed
has $\PROB\{ \pi f >t+\delta\} \geq 2^{(j-1)}\Delta$ whenever
$\PROB\{ f> t\} \ge 2^j \Delta$.

\begin{lemma} \label{lemma:real-est-1}
Assume the small-ball condition \eqref{eq:SB-used} where $\gamma \le \Delta/18$.
Then, for every $f \in \F \cap r S(L_2)$,
\[
\left|\PROB\{f >t\}-\PROB\{\pi f >t+\delta_f\}\right| \leq \frac{2\Delta}{9}~.
\]
\end{lemma}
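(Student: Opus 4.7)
The plan is to apply Lemma \ref{lemma:simple-obs} pointwise, take expectations with respect to $\PROB$, and estimate the two resulting terms separately---one using a Chebyshev-style tail bound coming from the fact that $\pi f$ is a net approximation, and the other using the small-ball assumption \eqref{eq:SB-used}.

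First, from Lemma \ref{lemma:simple-obs} applied with $h = \pi f$ and $\delta = \delta_f$, taking expectations gives
\[
\bigl|\PROB\{f>t\} - \PROB\{\pi f > t+\delta_f\}\bigr|
\;\le\; \PROB\{|f-\pi f| > \delta_f\} \;+\; \PROB\bigl\{\pi f \in (t-\delta_f, t+\delta_f]\bigr\}.
\]
I would then bound the two terms on the right. For the first, since $\pi f$ is the best $L_2$-approximation to $f$ from the $\Delta^{3/2} r/(1000 L)$-net $H_r$ inside $\F \cap r S(L_2)$, we have $\|f - \pi f\|_{L_2} \le \Delta^{3/2} r/(1000 L)$. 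Recalling that $\delta_f = \Delta \|f\|_{L_2}/(100 L) = \Delta r/(100 L)$, Chebyshev's inequality yields
\[
\PROB\{|f-\pi f| > \delta_f\} \;\le\; \frac{\|f-\pi f\|_{L_2}^2}{\delta_f^2}
\;\le\; \frac{\bigl(\Delta^{3/2} r/(1000 L)\bigr)^2}{\bigl(\Delta r/(100 L)\bigr)^2}
\;=\; \frac{\Delta}{100}~.
\]

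For the second term, I would apply the small-ball condition \eqref{eq:SB-used} to the function $\pi f$ (which also lies in $\F$ with $\|\pi f\|_{L_2} = r$) and the interval $I = (t-\delta_f, t+\delta_f]$ of length $2\delta_f$. This gives
\[
\PROB\bigl\{\pi f \in (t-\delta_f, t+\delta_f]\bigr\}
\;\le\; \max\!\left\{\frac{2 L \delta_f}{\|\pi f\|_{L_2}},\; \gamma \right\}
\;=\; \max\!\left\{\frac{\Delta}{50},\; \gamma \right\}
\;\le\; \frac{\Delta}{18}~,
\]
using the hypothesis $\gamma \le \Delta/18$. Combining the two estimates gives
\[
\bigl|\PROB\{f>t\} - \PROB\{\pi f > t+\delta_f\}\bigr| \;\le\; \frac{\Delta}{100} + \frac{\Delta}{18} \;\le\; \frac{2\Delta}{9}~,
\]
as claimed.

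\textbf{Where the difficulty lies.} The argument itself is essentially bookkeeping once Lemma \ref{lemma:simple-obs} is in hand. The only subtlety is the matching of the three scales---the net accuracy $\Delta^{3/2} r/(1000 L)$, the shift $\delta_f = \Delta r/(100 L)$, and the small-ball slack $\gamma$. These are calibrated precisely so that the Chebyshev bound is of order $\Delta$ (the gain of a factor $\Delta^{1/2}$ in squaring the ratio is what makes the first term negligible) while the small-ball term remains small thanks to the assumption $\gamma \le \Delta/18$; the reader should simply check that the displayed numerical constants are compatible.
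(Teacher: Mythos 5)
Your proof is correct and follows essentially the same route as the paper's: apply Lemma~\ref{lemma:simple-obs}, take expectations, bound the first term via Chebyshev using the net accuracy, and bound the second via the small-ball condition. In fact, your treatment of the second term is a bit more careful than the paper's own: the paper bounds $\PROB\{\pi f \in [t-\delta_f,t+\delta_f]\}$ by $2L\delta_f/r = \Delta/50$ only under the side condition $2L\delta_f/r \ge \gamma$, while you correctly keep the $\max\{\Delta/50,\gamma\}$ and invoke the hypothesis $\gamma \le \Delta/18$ to close both cases—this explains why the lemma's stated constant is $2\Delta/9$ rather than the $3\Delta/100$ that the paper's displayed computation produces.
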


\proof Fix $f \in \F \cap r S(L_2)$. By the small-ball condition, if $2L\delta_f/r \geq \gamma$ then
\[
\PROB\{\pi f \in [t-\delta_f,t+\delta_f]\} \leq \frac{2L\delta_f}{\|\pi f\|_{L_2}}=\frac{2L\delta_f}{r}~.
\]
Therefore,
\begin{eqnarray*}
\PROB\{|f-\pi f| \geq \delta_f\} + \PROB\{\pi f \in [t-\delta_f,t+\delta_f]\}
& \leq & \frac{\|f-\pi f\|_{L_2}^2}{\delta_f^2} + \frac{2L\delta_f}{\|\pi
  f\|_{L_2}}   \\
& \leq & \frac{\left(\frac{\Delta^{3/2}r}{1000 \cdot
         L}\right)^2}{\left(\frac{\Delta r}{100 \cdot L}\right)^2}
+ \frac{2L\Delta }{100\cdot L} \\
& = & \frac{3\Delta}{100}~.
\end{eqnarray*}
The stated inequality now follows from Lemma \ref{lemma:simple-obs}.
\endproof

Hence, under the small-ball condition, \eqref{eq:ratio-in-proof-1}
indeed holds whenever $\PROB\{ f> t\} \ge 2^j \Delta$.
Consider such an a function $f\in \F \cap r S(L_2)$.
Using
\[
\left| \frac{\PROB_N\{ \pi f >t+\delta_f\}}{\PROB\{\pi f >t+\delta_f\}}-1\right| \leq
\frac{2^{-{(j-1)/2}}}{4} < \frac{1}{2}~,
\]
the first term on the right-hand side of \eqref{eq:ratio-in-proof-1}
may be bounded as
\begin{eqnarray} \label{eq:pg4}
\lefteqn{
 \left|\frac{\PROB_N\{f >t\}}{\PROB\{f>t\}} - \frac{\PROB_N\{\pi f
  >t+\delta_f\}}{\PROB\{\pi f >t+\delta_f\}}\right| \nonumber }
\\
& \leq &
\left|\frac{\PROB_N\{f >t\}-\PROB_N\{\pi f
         >t+\delta_f\}}{\PROB\{f>t\}}\right| + \frac{\PROB_N\{\pi f
         >t+\delta_f\}}{\PROB\{\pi f >t+\delta_f\}} \left|\frac{\PROB\{f
         >t\}-\PROB(\pi f  >t+\delta_f)}{\PROB\{f>t\}}\right| \nonumber
\\
& \leq & \left|\frac{\PROB_N\{f >t\}-\PROB_N\{ \pi f
         >t+\delta_f\}}{\PROB\{f>t\}}\right| + 2 \left|\frac{\PROB\{f
         >t\}-\PROB\{\pi >t+\delta_f\}}{\PROB\{f>t\}}\right| \nonumber
  \\
& \le &
\frac{\PROB_N\{ |f-\pi f| \geq \delta_f\} }{\PROB\{f>t\}}
+
\frac{\PROB_N\{ \pi f \in [t-\delta_f,t+\delta_f]\}}{\PROB\{f>t\}}
\nonumber \\
& & +
2 \frac{\PROB\{ |f-\pi f| \geq \delta_f\} }{\PROB\{f>t\}}
+
2 \frac{\PROB\{ \pi f \in [t-\delta_f,t+\delta_f]\}}{\PROB\{f>t\}}
\nonumber \\
& & \text{(using Lemma \ref{lemma:simple-obs} twice)}
\nonumber \\
& \defeq & (I) + (II) + (III) + (IV)~.
\end{eqnarray}
Hence, we need to bound the four terms on the right-hand side. The
last two terms may be bounded without further work, as we have already
seen in the proof of Lemma \ref{lemma:real-est-1} that
\[
     (III) + (IV) \le \frac{3\Delta/100}{\PROB\{f>t\}}  \le
     \frac{3\cdot 2^{-j}}{100}~.
\]
In the remaining part of the proof we bound the empirical counterpart,
that is, the terms $(I)$ and $(II)$.

Since $\pi f \in H_r$, for term $(II)$, we may simply invoke part (b')
of Theorem \ref{thm:Properties-via-VC}  that implies that, with
probability at least $1-e^{-c \Delta N}$, for all $f \in \F \cap r S
(L_2)$, we have
\[
     (II) \le \frac{3}{4} \ (IV) + \frac{\Delta/10}{\PROB\{f>t\}}  \le
      2^{-j-3}~.
\]
It remains to bound
\begin{equation}
\label{eq:uniftailestimate}
\sup_{f \in \F \cap r S(L_2)} \PROB_N\{|f-\pi f| > \delta_f\}
= \sup_{f \in \F \cap r S(L_2)} \frac{1}{N}\sum_{i=1}^N \IND_{\{|f-\pi f| > \delta_f\}}(X_i)~.
\end{equation}
This is done in the next lemma that implies that, with probability at
least $1-e^{-c\Delta N}$, for all $f\in \F \cap r S(L_2)$ with
$\PROB\{f>t\} \ge 2^j \Delta$,
\[
   (I) \le \frac{ 2^{-j} }{10}~.
\]
Putting everything together, we get that there exists a universal
constant $c>0$ such that, for every nonnegative integer $j$, with
probability at least $1-e^{-c \Delta N}$, we have
\[
\left|\frac{\PROB_N\{f >t\}}{\PROB\{f>t\}} -1\right|
   \leq   2^{-j} \left( \frac{3 }{100} +\frac{1}{8} + \frac{1}{10} \right)
+\frac{2^{-{(j-1)/2}}}{4} \le
2^{-(j/2-1)}~.
\]
Since there are at most $\log_2 N$ relevant values of $j$, the union
bound implies part (a) of Theorem \ref{thm:uniform-properties}
for functions  $f\in \F \cap r S(L_2)$.
To deal with functions that satisfy $\|f\|_{L_2} \geq r$, fix such a
function and $t \in \R$
for which $\PROB\{f>t\} \geq 2^j \Delta$. Put $t_r=rt/\|f\|_{L_2}$ and $f_r=rf/\|f\|_{L_2} \in F \cap r S(L_2)$, and note that
\[
\{f > t \} = \left\{\frac{rf}{\|f\|_{L_2}} \cdot \frac{\|f\|_{L_2}}{r} > t \right\} = \left\{f_r > t_r\right\}~.
\]
Thus, $\PROB\{f_r > t_r\} \geq 2^j\Delta$,
\[
\frac{\PROB_N\{f_r>t_r\}}{\PROB\{f_r>t_r\}}=\frac{\PROB_N\{f>t\}}{\PROB\{  f>t\}},
\]
and the claim follows from the bound for $\F \cap r S(L_2)$ and by the
star-shaped property of $\F$.

\begin{lemma} \label{thm:empirical-part-1}
Let $r > \ol{\rho}_N(c,\Delta,L)$. Then, for  some constant $c_1>0$, with probability at least
$1-e^{c_1 \Delta N}$,
\[
\sup_{f \in \F \cap r S(L_2)} \frac{1}{N}\sum_{i=1}^N
  \IND_{\{|f-\pi f| > \delta_f\}}(X_i) \leq  \frac{\Delta}{10}~.
\]
\end{lemma}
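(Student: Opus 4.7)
Since we restrict to $f \in \F \cap r S(L_2)$, the value $\delta_f = \Delta\|f\|_{L_2}/(100L) = \Delta r/(100L)$ is independent of $f$; write $\delta = \Delta r/(100L)$. Moreover, by the defining property of the net $H_r$, every $f$ satisfies $\|f - \pi f\|_{L_2} \le \Delta^{3/2} r/(1000L)$, so each difference $f - \pi f$ lies in the class $\G = (\F - \F) \cap (\Delta^{3/2} r) D$ appearing in condition $(2)$ of Definition \ref{def:criticallevel}. The natural strategy is to replace the discontinuous indicator by a Lipschitz surrogate and then use symmetrization and contraction to transfer the problem to a Rademacher estimate on $\G$.

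Concretely, let $\phi:\R \to [0,1]$ be a piecewise linear function with $\phi(x) = 0$ on $|x| \le \delta/2$, $\phi(x)=1$ on $|x| \ge \delta$, and linear in between. Then $\phi(0)=0$, $\phi$ is $(2/\delta)$-Lipschitz, and
\[
\IND_{\{|x|>\delta\}} \le \phi(x) \le \IND_{\{|x|>\delta/2\}}.
\]
Set $Z = \sup_{f \in \F \cap r S(L_2)} N^{-1}\sum_{i=1}^N \phi\bigl(f(X_i) - \pi f(X_i)\bigr)$. Chebyshev's inequality applied to $f - \pi f$ gives $\EXP \phi(f - \pi f) \le \PROB\{|f-\pi f|>\delta/2\} \le 4\|f-\pi f\|_{L_2}^2/\delta^2 \le \Delta/25$. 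By standard symmetrization and the Ledoux--Talagrand contraction principle (using that $\phi$ is $(2/\delta)$-Lipschitz with $\phi(0)=0$),
\[
\EXP Z \le \frac{\Delta}{25} + 2\cdot\frac{2}{\delta}\,\EXP\sup_{u \in \G}\left|\frac{1}{N}\sum_{i=1}^N \eps_i u(X_i)\right| \le \frac{\Delta}{25} + \frac{4\cdot 100L}{\Delta r}\cdot\frac{\Delta^2 r}{40000L} = \frac{\Delta}{25}+\frac{\Delta}{100} = \frac{\Delta}{20}.
\]

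To upgrade the bound in expectation to a high-probability statement, I would apply Talagrand's (or Bousquet's) concentration inequality for suprema of empirical processes of $[0,1]$-valued functions. The relevant weak variance satisfies $\sigma^2 \le \sup_f \PROB\{|f-\pi f| > \delta/2\} \le \Delta/25$, and the envelope is bounded by $1$. The Bousquet inequality therefore yields
\[
\PROB\left\{Z \ge \EXP Z + t\right\} \le \exp\!\left(-\frac{cN t^2}{\sigma^2 + t}\right),
\]
and choosing $t = \Delta/20$ gives, for some absolute constant $c_1 > 0$, the desired tail bound $\PROB\{Z \ge \Delta/10\} \le e^{-c_1 \Delta N}$. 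Since the original quantity $N^{-1}\sum_i \IND_{\{|f - \pi f| > \delta_f\}}(X_i)$ is dominated by $\phi(f - \pi f)$ pointwise, this establishes the claim.

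The main obstacle is bookkeeping: pushing through the variance-sensitive concentration inequality to obtain the correct $\exp(-c\Delta N)$ exponent (as opposed to $\exp(-c\Delta^2 N)$ that a naive McDiarmid would give) is essential, and every multiplicative constant in the definition of $\ol{\rho}_N$ (the $1000$ in the net width and the $40000$ in the Rademacher bound) has been calibrated precisely so that the Lipschitz constant $2/\delta$ cancels against the radius $\Delta^{3/2}r$ of $\G$ to produce a clean $\Delta/100$ from the contraction step.
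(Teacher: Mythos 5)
Your proposal is correct and follows essentially the same strategy as the paper: bound the variance proxy by Chebyshev, control the expectation via the Lipschitz surrogate $\phi$ combined with symmetrization and the Ledoux--Talagrand contraction principle (using that $r>\ol{\rho}_N$ to invoke the Rademacher bound from Definition~\ref{def:criticallevel}), and finish with Talagrand/Bousquet concentration, choosing the deviation level $t\asymp\Delta$ to obtain the $\exp(-c\Delta N)$ tail. The only cosmetic difference is that the paper applies the concentration inequality directly to the indicator-supremum $Z$ and uses $\phi$ only to bound $\EXP Z$, whereas you apply it to the $\phi$-surrogate and then dominate the indicators pointwise; both routes give the same conclusion.
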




\proof
Recall that, by definition, $r> \ol{\rho}_N(c,\Delta,L)$ implies that
\begin{equation}
\label{eq:rademacher}
\E \sup_{f \in \F \cap r S(L_2)} \left|\frac{1}{N}\sum_{i=1}^N \eps_i
  (f-\pi f)(X_i) \right| \leq  \frac{\Delta^2 r}{40000 \cdot L}~.
\end{equation}
Define
\[
Z \defeq \sup_{f \in \F \cap r S(L_2)} \frac{1}{N}\sum_{i=1}^N \IND_{\{|f-\pi f| > \delta_f\}}(X_i)~.
\]
The proof
is based on Talagrand's concentration inequality
for the supremum of empirical processes
 \cite{Tal96c} which implies that for $x>0$, with probability at least $1-\exp(-x)$,
$$
Z \leq \frac{3}{2}\E Z + 4\sigma_\F \sqrt{\frac{x}{N}} + \frac{6x}{N}
$$
(see, e.g., Theorems 11.8 and 12.2 in \cite{BoLuMa13}) where
$\sigma_\F = \sup_{f \in \F \cap r S(L_2)} \PROB^{1/2}\{|f-\pi f| > \delta_f\}$.

Here, recalling that $\delta_f  = \Delta \|f\|_{L_2}$ and the fact
that $\delta_f$ is the same for every $f \in \F \cap rS(L_2)$,
by Chebyshev's inequality,
\[
\sigma_\F \leq \sup_{f \in \F \cap r S(L_2)} \frac{\|f-\pi
  f\|_{L_2}}{\delta_f} \leq \frac{\Delta^{3/2}r}{\Delta r} = \frac{\sqrt{\Delta}}{10}~.
\]
Next, let
\begin{equation*}
\phi_\delta(t) =
\begin{cases}
1 &\mbox{if } t \geq \delta~,
\\
\frac {2}{\delta}\left(t-\frac{\delta}{2}\right) & \mbox{if }  t \in \left[\frac{\delta}{2},\delta\right]~,
\\
0 & \mbox{otherwise.}
\end{cases}
\end{equation*}
Therefore, $\phi_\delta(t) \ge \IND_{t>\delta}$ is a Lipschitz function with Lipschitz
constant $2/\delta$ that satisfies $\phi_\delta(0)=0$.
By the Gin\'{e}-Zinn symmetrization theorem \cite{GiZi84} followed by
the contraction inequality for Bernoulli processes \cite{LeTa91},

\begin{eqnarray*}
\E Z & \leq & \E \sup_{f \in \F \cap rS(L_2)} \frac{1}{N} \sum_{i=1}^N \phi_\delta(|f - \pi f|(X_i))
\\
& \leq & 2\E \sup_{f \in \F \cap rS(L_2)} \left|\frac{1}{N} \sum_{i=1}^N \eps_i \phi_\delta(|f - \pi f|(X_i))\right| + \sup_{f \in \F \cap rS(L_2)} \E \phi_\delta(|f - \pi f|(X_i))
\\
& \leq & \frac{4}{\delta} \E \sup_{f \in \F \cap rS(L_2)} \left|\frac{1}{N} \sum_{i=1}^N \eps_i (f - \pi f)(X_i)\right| + \frac{\Delta}{25}~,
\end{eqnarray*}
where the last inequality holds because
\[
\E \phi_\delta(|f - \pi f|(X_i)) \leq \PROB(|f-\pi f|(X) \geq
\delta_f/2) \leq \frac{4 \|f-\pi f\|_{L_2}^2}{\delta_f^2} \leq
\frac{\Delta}{25}~.
\]
Using \eqref{eq:rademacher}, we have
$$
\frac{4}{\delta_f} \E \sup_{f \in \F \cap rS(L_2)} \left|\frac{1}{N} \sum_{i=1}^N \eps_i (f - \pi f)(X_i)\right| \leq \frac{\Delta}{100}~,
$$
and therefore, with probability at least $1-2\exp(-x)$,
\begin{equation*}
\sup_{f \in \F \cap r S(L_2)} \PROB_N(|f-\pi f| > \delta_f)
\leq \frac{3\Delta}{40} + 4\Delta^{1/2} \sqrt{\frac{x}{N}}+ \frac{6x}{N}~.
\end{equation*}
The claim follows by setting $x =c_0 N\Delta$ for a sufficiently small
value of $c_0$.
\endproof

In order to complete the proof of Theorem
\ref{thm:uniform-properties}, it only remains to prove part (b).
The proof is completely analogous with part (a). First
we consider a net of $\F \cap r S(L_2)$ and use part (b') of Theorem
\ref{thm:Properties-via-VC}. Then one may extend the inequality
to all $\F \cap r S(L_2)$ in a similar fashion. In order to avoid
repetition of the same ideas, we omit the details.

\section{Covariance estimation with trimmed means}
\label{sec:variance}

In this section we present the first main component of the mean
estimation procedure. As explained in the introduction, in this first
step we need to estimate the directional variances
$\sigma^2(u) = \var(\inr{X,u})$ in all directions where
$\sigma^2(u)$ is ``not too small.'' This estimator does not need to be
very accurate. It is sufficient for our purposes that the estimator is
correct up to a constant factor.

For this purpose, we require a bit more than the existence of the
covariance matrix $\Sigma$. The key assumption we use is
``$L_q$-$L_2$ norm equivalence'' for some $q>2$. More precisely,
we assume that there exist $q>2$ and $\kappa>0$ such that, for all $u\in S^{d-1}$,
\[
     \left( \EXP \inr{X-\mu, u}^q \right)^{1/q}  \le \kappa      \left( \EXP \inr{X-\mu, u}^2 \right)^{1/2}~.
\]
In other words, writing $\ol{X}=X-\E X$, we assume that for all $u\in S^{d-1}$,
\[
\left\|\inr{\ol{X},u}\right\|_{L_q} \leq  \kappa \left\| \inr{\ol{X},u}\right\|_{L_2}~.
\]
The proposed estimator is quite natural. For each direction $u \in S^{d-1}$,
we compute an appropriately trimmed empirical variance. Unlike standard trimmed mean estimators, where the truncation occurs at a pre-set level, here we trim by removing a fixed number of the largest and smallest values of $\inr{X_i,u}$ corresponding to each direction $u \in S^{d-1}$. To show that
this estimator satisfies the desired properties simultaneously for all
directions, we make use of the tools developed in Sections
\ref{sec:tailintegration} and \ref{sec:uniform}.
The main ingredient of the analysis is Theorem
\ref{thm:uniform-properties}. This theorem requires
that the ``small-ball'' condition \eqref{eq:SB-used} is satisfied. To guarantee this property, we form blocks of a fixed size of the given sample, and take the empirical average within each block. In Section \ref{sec:normequivalence} we show that under $L_q$-$L_2$ norm equivalence, it suffices to form blocks of
constant size (depending in $\kappa$ and $q$).

Let us describe the covariance estimation procedure. In order to estimate variances without knowing the expected values, we
use the standard trick that, if $X'$ is an independent copy of $X$,
then $\var(\inr{X,u}) = (1/2) \EXP \inr{X-X',u}^2$.
It is easy to see that if $X$ satisfies $L_q-L_2$ norm equivalence with constant $\kappa$ then 
$\wt{X}=X-X'$ satisfies $L_q$-$L_2$ norm equivalence with
constant $\sqrt{2}\kappa$.

Thus, we
split the data in two halves to form independent pairs of
observations. For the sake of simpler notation, assume that
we are given $2N$ independent copies, $X_1,\ldots,X_{2N}$
and for $i \in [N]$, define $\wt{X}_i = X_i - X_{N+i}$.

The proposed covariance estimator has two tuning parameters,
$\gamma,\theta \in (0,1)$.

For a positive integer $m$, define
  $Z=\frac{1}{\sqrt{m}} \sum_{i=1}^m \wt{X}_i$.
By Lemma \ref{lem:normequivalence}, there is a constant
$c_1(\kappa,q)$ such that if
$m =\left\lceil \frac{c_1(\kappa,q)}{\gamma^2} \right\rceil$,
then for any $u \in  S^{d-1}$, and for all intervals $I\subset \R$,
\[
\PROB\{ \inr{u,Z} \in I\} \leq \max \left\{L \frac{|I|}{\sigma(u)}, \gamma\right\}~,
\]
where $L > 0$ is a numerical constant.
This implies that every function in the class $\F=\left\{\inr{u,Z}: u  \in B_2^d \right\}$
satisfies the key ``small-ball'' assumption in Theorem \ref{thm:uniform-properties}.

Once the value of $m$ is set, the sample $\wt{X}_1,\ldots,\wt{X}_N$ is
divided into $n=N/m$ blocks, each one of cardinality $m$.
(We may assume, without loss of generality, that $m$ divides $N$.)
For each block $j\in [n]$, we may compute
\[
     Z_j =\frac{1}{\sqrt{m}} \sum_{i=1}^m \wt{X}_{m(j-1)+ i}~.
\]
For every $u \in S^{d-1}$, denote by $J_+(u)$ the set of indices of
the $\theta n$ largest values of $\inr{Z_j,u}$ and
define
\[
\psi_N (u) = \frac{1}{2n} \sum_{j \in [n]\setminus J_+(u)}
\left(\inr{Z_j,u}\right)^2~.
\]

The following theorem summarizes the main performance guarantees of
the estimator $\psi_N(u)$. It is a crucial ingredient of the
mean estimator introduced in the next section.
\begin{proposition} \label{thm:iso-variance}
Assume the condition of Theorem \ref{thm:meanest}.
There are constants $\gamma, \theta \in (0,1)$ and $c_0,c'>0$ depending on $\kappa$
and $q$ for which the following
holds.
Set $m \ge \frac{c_1(\kappa,q)}{\gamma^2}$ and
\[
r^2 = \frac{c_0}{n} \sum_{i \geq c_0 N} \lambda_i~.
\]
Then, with probability at least $1-2\exp(-c' N)$, the estimator $\psi_N$ satisfies
\begin{description}
\item{$(i)$} If $u \in S^{d-1}$ is such that $\sigma(u) \geq r$, then
\[
\frac{1}{4}\sigma ^2(u) \leq \psi_N(u) \leq   2 \sigma ^2(u)~.
\]
\item{$(ii)$} If $\sigma(u) \leq r$ then $\psi_N(u) \leq Cr^2$ for an absolute constant $C$.
\end{description}
\end{proposition}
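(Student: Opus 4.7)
The plan is to combine the trimmed-moment machinery of Section \ref{sec:tailintegration} with the uniform ratio bounds of Section \ref{sec:uniform}, applied to the linear class $\F = \{f_u(z) = \inr{u,z} : u \in B_2^d\}$ evaluated at the block-average vectors $Z_j$. By Lemma \ref{lem:normequivalence}, setting $m = \lceil c_1(\kappa,q)/\gamma^2\rceil$ forces every $f_u$ to satisfy the small-ball hypothesis \eqref{eq:SB-used} of Theorem \ref{thm:uniform-properties} with constants $L$ and $\gamma = c_1 \Delta$, where $\Delta$ will be a small constant depending only on $\kappa$ and $q$. Symmetry of $\wt X := X - X'$ makes $W_u := \inr{Z, u}$ symmetric around $0$, which makes property $(3)$ of Definition \ref{def:ratioconditions} free and yields $\EXP W_u^2 = 2\sigma^2(u)$.

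The main technical step is estimating the critical level $\bar\rho_n(c, \Delta, L)$ of $\F$. Identifying $f_u$ with $u$, the $L_2$ seminorm on $\F$ equals $\sqrt{2}\,\|\cdot\|_\Sigma$, so $\F \cap r S(L_2)$ is a spherical shell of a Mahalanobis ellipsoid. Standard ellipsoid covering estimates give $\log {\cal M}(\F \cap r S(L_2),\, \eta r) \lesssim \#\{i : \lambda_i \geq \eta^2 r^2/2\}$, while the Rademacher average over $(\F - \F) \cap \Delta^{3/2} r D$, viewed as a Gaussian width of an intersection of two ellipsoids, is of order $n^{-1/2}\sqrt{\sum_i \min(\lambda_i,\, \Delta^3 r^2)}$. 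A peeling over eigenvalue scales matches both bounds against the ansatz $r^2 = c_0 n^{-1} \sum_{i \geq c_0 N}\lambda_i$ and verifies $r > \bar\rho_n$ for appropriate constants.

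Applying Theorem \ref{thm:uniform-properties} then yields an event $\Omega$ of probability at least $1 - 2 e^{-c' N}$ on which properties $(1)$ and $(2)$ of Definition \ref{def:ratioconditions} hold simultaneously for every $W_u$ with $\sigma(u) \geq r$. For each such $u$, the random variable $W_u$ meets the hypotheses of Theorem \ref{thm:main-integral} with $p = 2$: by part $(a)$ and symmetry, $J_+(u)$ consists only of indices with $W_{u,j} > 0$, and Lemma \ref{lemma:est-on-hat-Q} locates $\wh{Q}_+(u)$ in $(Q_{1-\theta_1}, Q_{1-\theta_2})$. Parts $(b)$--$(c)$ of Theorem \ref{thm:main-integral}, combined with Corollary \ref{thm:main-single-integral} and Lemma \ref{lemma:basic-parameters} under the $L_q$-$L_2$ equivalence inherited by $W_u$ from \eqref{eq:normequivalence}, then control the trimmed sums of $(W_{u,j})_+^2$ and $(W_{u,j})_-^2$ to within $O(\kappa^2\sqrt\Delta)\, \sigma^2(u)$ of $\sigma^2(u)$ each. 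Combining these via the $1/(2n)$ normalization and choosing $\Delta$ small enough (depending on $\kappa, q$) yields $\tfrac{1}{4}\sigma^2(u) \leq \psi_N(u) \leq 2\sigma^2(u)$, which is (i).

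For part (ii), observe that the ratio estimates of Theorem \ref{thm:uniform-properties} depend only on the direction of $u$, since $\PROB_N\{c f > t\}/\PROB\{c f > t\} = \PROB_N\{f > t/c\}/\PROB\{f > t/c\}$, and $\psi_N$ extends naturally to a degree-$2$ homogeneous function on $\R^d \setminus \{0\}$. For $u \in S^{d-1}$ with $\sigma(u) < r$, set $v = (r/\sigma(u)) u$ so that $\sigma(v) = r$; the argument of (i) applied to $v$ gives $\psi_N(v) \leq 2 r^2$, and the homogeneity identity $\psi_N(u) = (\sigma(u)/r)^2 \psi_N(v)$ then yields $\psi_N(u) \leq 2\sigma^2(u) \leq 2 r^2$. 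The chief obstacle throughout is the critical-level computation in the second paragraph, where the precise eigenvalue form of $r$ emerges from the Gaussian-width estimate for the intersection of the Euclidean unit ball with a Mahalanobis ellipsoid.
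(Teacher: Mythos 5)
Your treatment of part (i) and of the critical-level computation tracks the paper's argument faithfully: you invoke Theorem \ref{thm:uniform-properties} for $\F = \{\inr{u,\cdot}: u \in B_2^d\}$, obtain property $(3)$ of Definition \ref{def:ratioconditions} for free from the symmetry of $Z$, combine Lemma \ref{lemma:est-on-hat-Q} with the tail-integration estimates from Section \ref{sec:tailintegration}, and estimate $\bar\rho_n$ via Sudakov's inequality and Gaussian widths of intersected ellipsoids. This matches the paper's ``above the critical level'' and ``the critical level'' steps.

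Your argument for part (ii), however, has a genuine gap. You propose to take $u \in S^{d-1}$ with $\sigma(u) < r$, scale it to $v = (r/\sigma(u))u$ so that $\sigma(v) = r$, run the argument of part (i) on $v$, and then descend by degree-$2$ homogeneity of $\psi_N$. But $\|v\|_2 = r/\sigma(u) > 1$, so $v \notin B_2^d$ and $\inr{v,\cdot} \notin \F$. Theorem \ref{thm:uniform-properties} controls ratios only for $f \in \F$ with $\|f\|_{L_2} \ge r$; by the scale-invariance of ratios (the very observation you make), this is \emph{exactly} the set of directions $\hat u \in S^{d-1}$ with $\sigma(\hat u)$ above the critical level, and gives nothing in the complementary cone. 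You cannot rescue this by enlarging $\F$ to contain $v$: the packing numbers and Rademacher averages in Definition \ref{def:criticallevel} would be computed for the enlarged ellipsoid $T(RB_2^d)\cap rB_2^d$ with $R = r/\sigma(u)$ possibly unbounded, so the critical level itself inflates and the inequality $r > \bar\rho_n$ would no longer be available. Moreover, the bound you would obtain, $\psi_N(u) \le 2\sigma^2(u)$, is strictly stronger than what the proposition claims ($\psi_N(u) \le Cr^2$); such pointwise multiplicative control below the critical level is precisely what the uniform ratio machinery cannot deliver.

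The paper handles (ii) by a different device. It bounds the $\theta n$-th order statistic $\bigl(|\inr{u,Z}|\bigr)^{\sharp}_{\theta n}$ uniformly over $u \in B_2^d \cap r D$ by a direct small-ball argument: a maximal $\sqrt{\theta}\,r$-separated net of $B_2^d \cap rD$ is handled pointwise via Markov's inequality and the $L_q$-$L_2$ equivalence together with a union bound, and the oscillation off the net is controlled by the bounded-differences inequality plus symmetrization and contraction, which reduces to the same Rademacher-average estimate already used in the critical-level computation. Once the $\theta n$-th order statistic is $O(r/\sqrt\theta)$, the definition of the trimmed estimator gives $\psi_N(u) \le Cr^2$ immediately. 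You should replace your homogeneity argument with this order-statistic bound.
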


\begin{proof}
Fix $\gamma,\theta \in (0,1)$ and consider the resulting estimator
$\psi_N$. (Recall that in the definition of $\psi_N$,
the block size $m$ depends on $\gamma$, as well as on the constants
$\kappa$ and $q$ of the norm equivalence condition). We show that
$\gamma$ and $\theta$ may be chosen so that the inequalities of the
theorem hold. In particular,
let $c, c_1$ be the constants appearing in Theorem \ref{thm:uniform-properties}.
We show that it is sufficient if the parameters $\gamma,\theta \in (0,1)$
satisfy $2\theta +8\gamma/c_1 < (5\kappa^2)^{-q/(q-2)}$ and $\theta > 7\gamma/c_1$.

The proof consists of three parts. First we show that $(i)$ holds
above such a the critical level $r$. This is based on Theorems \ref{thm:main-integral}
and \ref{thm:uniform-properties}. Second, we show that
the announced value of $r$ satisfies $r > \ol{\rho}_n(c,\Delta,L)$ and
therefore it is a valid choice.
Finally,  we prove part $(ii)$ of the
theorem, based on the \emph{small-ball method}.
Before the proof, we establish some consequences of the
$L_q-L_2$ norm equivalence condition \eqref{eq:normequivalence}.


\subsection{$L_q-L_2$ norm equivalence implies a small ball property}
\label{sec:normequivalence}

Let $Y$ be an absolutely continuous real-valued random variable that
satisfies $\|Y-\E Y\|_{L_q} \leq \kappa \|Y-\E Y\|_{L_2}$ for some
$q>2$ and $\kappa >0$.
Let $Y_1,\ldots,Y_m$ be independent copies of $Y$.
To ease notation, set $\ol{Y}=Y-\E Y$.

\begin{lemma}
\label{lem:normequivalence}
Define $Z_m=\frac{1}{\sqrt{m}} \sum_{i=1}^m \ol{Y}_i$.
\begin{itemize}
\item
There exists a constant $m_0(q,\kappa)$ such that
 if $m \geq m_0(q,\kappa)$, then
\begin{equation} \label{eq:fact1}
\PROB\left\{ Z_m \geq 0 \right\} \geq \frac{1}{4}
\quad \text{and} \quad
\PROB\left\{  Z_m \leq 0 \right\} \geq \frac{1}{4}~.
\end{equation}
\item
Let $0<\xi<1/2$.  Then
\begin{equation} \label{eq:fact2}
\E \ol{Y}^2 \IND_{\{ |\ol{Y}| \geq Q_{1-\alpha}(|\ol{Y}|)\}} \leq \xi \E \ol{Y}^2~,
\end{equation}
where $\alpha=(\xi/\kappa^2)^{q/(q-2)}$.
\item
\begin{equation} \label{eq:fact3}
\|Z_m\|_{L_q} \leq (4(q-1))^{1/2} \kappa \|Z_m\|_{L_2}~.
\end{equation}
\item
There exists a numerical constant $L$ and
a constant $c_1=c_1(\kappa,q)$ such that,
for any $\gamma \in (0,1)$,  if $m \ge c_1/\gamma^2$, then for all
intervals $I\subset \R$,
\begin{equation} \label{eq:fact4}
\PROB\{ Z_m \in I\} \leq \max \left\{L \frac{|I|}{\left\|\ol{Y}\right\|_{L_2}}, \gamma\right\}~.
\end{equation}
\end{itemize}
\end{lemma}

\begin{proof}
\eqref{eq:fact1} follows from a generalization of the Berry-Esseen theorem (see, e.g., \cite{MoODOl10}),

To prove \eqref{eq:fact2}, note that by H\"older's inequality for $\beta=q/2$ and the $L_q-L_2$ norm equivalence,
\[
\E \left[\ol{Y}^2 \IND_{\{ |\ol{Y}| \geq Q_{1-\alpha}(\ol{Y})\}} \right]
\leq \left\|\ol{Y}\right\|_{L_q}^2 \PROB\left\{ \ol{Y} \geq Q_{1-\alpha}(|\ol{Y})| \right\}^{1-\frac{2}{q}} \leq \kappa^2 \E \ol{Y}^2 \cdot \alpha^{1-\frac{2}{q}}~.
\]
For the proof of \eqref{eq:fact3}, observe that, if $\eps_1,\ldots,\eps_m$ are independent symmetric
Bernoulli random variables with $\PROB\{\epsilon_i=1\}
=\PROB\{\epsilon_i=- 1\} =1/2$, then by symmetrization and Khintchine's
inequality (see, e.g, \cite[p.21]{DeGi99},
\begin{equation*}
\E |Z_m|^q \leq 2^q \E \left|\frac{1}{\sqrt{m}}\sum_{i=1}^m \eps_i \ol{Y}_i \right|^q \leq (4(q-1))^{q/2}\E  \left|\frac{1}{m} \sum_{i=1}^m \ol{Y}_i^2 \right|^{\frac{q}{2}} \leq (4(q-1)\kappa^2)^{q/2}\E \ol{Y}^2~,
\end{equation*}
where the last inequality follows from the norm-equivalence condition \eqref{eq:normequivalence} because, by the convexity of $\phi(t)=|t|^{q/2}$,
\[
\left|\frac{1}{m} \sum_{i=1}^m \ol{Y}_i^2 \right|^{\frac{q}{2}} \leq \frac{1}{m} \sum_{i=1}^m \left|\ol{Y}_i\right|^q~.
\]
It remains to prove the small-ball bound of \eqref{eq:fact4}.
Setting $\xi=1/50$, it follows from \eqref{eq:fact2} that
$\E \left[\ol{Y}^2 \IND_{\{ |\ol{Y}| \geq Q_{1-\alpha}(|\ol{Y}|)\}}
\right] \leq
(1/50) \E \ol{Y}^2$ where $\alpha=(1/(50\kappa^2)^{q/(q-2)}$.
Moreover, by Chebychev's inequality, $Q_{1-\alpha}(|\ol{Y}|) \leq
\left\|\ol{Y}\right\|_{L_2}/\sqrt{\alpha}$, and therefore
\[
\E \left[ \ol{Y}^2 \IND_{\{ |\ol{Y}| \geq
    \left\|\ol{Y}\right\|_{L_2}/\sqrt{\alpha} \}} \right] \leq \frac{\E\left[ \ol{Y}^2\right]}{50}~.
\]
By the second part of Theorem 3.2 in Mendelson \cite{Men20a}, there exists a
constant $c_1$ depending on $\alpha$ (and hence on $\kappa$ and $q$) such that,
for any $\gamma \in (0,1)$,  if $m \ge c_1/\gamma^2$, then
\[
\sup_{x \in R} \PROB\{ |Z_m-x| \leq c_2 \gamma \left\|\ol{Y}\right\|_{L_2}\} \leq \gamma~,
\]
where $c_2$ is an absolute constant. Hence, for any interval $I \subset \R$,
\[
\PROB\{ Z_m \in I \} \leq \max \left\{\frac{|I|}{c_2
    \left\|\ol{Y}\right\|_{L_2}}, \gamma \right\}~.
\]
\end{proof}

\subsubsection*{Above the critical level}

We start by proving part $(i)$ of Proposition
\ref{thm:iso-variance}.
The proof is based on applying
Theorem \ref{thm:uniform-properties} for the class
$\F=\left\{\inr{u,\cdot}: u  \in B_2^d \right\}$.

Let $\Delta = \gamma/c_1$ and let $\ol{\rho}_n(c,\Delta,L)$ be the
critical level of the class $\F$, as defined in Definition \ref{def:criticallevel}.
Let $r > \ol{\rho}_n(c,\Delta,L)$ be arbitrary.

Let $D=\{u\in \Rd: \sigma(u) \le 1\}$.
Clearly, $\sqrt{2}\sigma(u)=\|\inr{u,Z}\|_{L_2}$ and for each $u$,
$\inr{u,Z}$ is a symmetric random variable.

The class $\F=\left\{\inr{u,\cdot}: u \in B_2^d\right\}$ is star-shaped
around $0$ and therefore it satisfies the conditions of Theorem
\ref{thm:uniform-properties}.
By Theorem \ref{thm:uniform-properties}, there is a numerical constant
$c_2>0$ and an event ${\cal A}$
of probability at least $1-2\exp(-c_2 \gamma n)$, on which the following
holds: for every $u \in B_2^d$ such that $\left\|\inr{u,\ol{X}}\right\|_{L_2} \geq r$,
the random variable $\inr{u,Z}$ satisfies properties $(1)$-$(2)$ of Definition
\ref{def:ratioconditions}.
Moreover, property $(3)$ holds
trivially for every $\inr{u,Z}$ by the symmetry of these random
variables.

Suppose that the event ${\cal A}$ occurs and let $u \in B_2^d$ be such that $\sigma(u) \geq r$.

Let $\left(\inr{u,Z_j}^\sharp\right)_{j=1}^n$ be the monotone nonincreasing
rearrangement of $\inr{u,Z_1},\ldots, \inr{u,Z_n}$.
Define $\wh{Q}(u) = \inr{u,Z_{\theta n}}^\sharp$ and denote by $Q_q(u)$
the $q$-quantile of the random variable $\inr{u,Z}$. By Lemma \ref{lemma:est-on-hat-Q} and the symmetry of the random variables, it follows that
\[
Q_{1-(2\theta+8\Delta)}(u) \leq \wh{Q}(u) \leq Q_{1-(2\theta-8\Delta)/3}(u)~.
\]
Setting
\[
Q_3 \defeq Q_{1-(2\theta+8\Delta)}(u) \quad \text{and} \quad Q_4 \defeq Q_{1-(2\theta-8\Delta)/3}(u)~,
\]
just as in the proof of Lemma \ref{lemma:int-preliminary},
we have
\[
2 \psi_N(u) \geq \int_0^{Q_3} 2t\PROB_n\left\{ |\inr{u,Z}|>t
\right\} dt - \theta \wh{Q}^2(u)
\]
and
\[
2 \psi_N(u) \leq  \int_0^{Q_4} 2t\PROB_n\left\{|\inr{u,Z}|>t \right\} dt~.
\]
Since $\theta > 7\Delta$, we have $(2\theta-8\Delta)/3 \ge \Delta$ and
therefore if $0 \leq t \leq Q_4$ then $\PROB\left\{ |\inr{u,Z}| > t
\right\} \geq \Delta$.
Thus, by Theorem \ref{thm:uniform-properties}, for  $t\in [0, Q_4]$,
\[
\frac{1}{2} \PROB\left\{ |\inr{u,Z}| > t \right\} \leq \PROB_n\left\{
  |\inr{u,Z}| > t \right\} \leq \frac{3}{2}\PROB\left\{
  |\inr{u,Z}| > t \right\}~.
\]
Also, just as in \eqref{eq:hat-Q-tail},
\[
\theta \wh{Q}^2  \le 2 \E \left[ \inr{u,Z}^2\IND_{\{|\inr{u,Z}|
    \geq Q_3\}} \right]~.
\]
Hence,
\[
2 \psi_N(u) \geq \frac{1}{2} \E \inr{u,Z}^2 - \frac{5}{2} \E \left[
  \inr{u,Z}^2\IND_{\{|\inr{u,Z}| \geq Q_3\}} \right] \geq \frac{1}{4} \E \inr{u,Z}^2,
\]
provided that $5\E \inr{u,Z}^2\IND_{\{|\inr{u,Z}| \geq Q_3\}} \leq \E
\inr{u,Z}^2$, which holds by \eqref{eq:fact2} of Lemma
\ref{lem:normequivalence} whenever $2\theta+ 8\Delta \le (5\kappa^2)^{-q/(q-2)}$.

In the reverse direction,
\[
2 \psi_N(u) \leq  \int_0^{Q_4} 2t\PROB_n\left\{ |\inr{u,Z}|>t
\right\} dt \leq 2\int_0^\infty 2t\PROB\left\{ |\inr{u,Z}|>t \right\} dt = 2 \E \inr{Z,u}^2~,
\]
and combining the two inequalities we have that for every $v \in \R^d$,
\[
\frac{1}{4} \sigma ^2(u) \leq \psi(v) \leq 2\sigma^2(u)^2~,
\]
as claimed.

\subsubsection*{The critical level}

Next we show that there exists a constant $c_0$ such that, for the
class of functions $\F=\left\{\inr{u,\cdot}: u \in B_2^d\right\}$,
\[
r = \sqrt{\frac{c_0}{n} \sum_{i \geq c_0 n} \lambda_i} \ge  \ol{\rho}_n(c,\Delta,L)
\]
for the values of $c,\Delta,L$ introduced the first part of the proof
above. (Recall Definition \ref{def:criticallevel} where
$\ol{\rho}_n(c,\Delta,L)$ was introduced; also note that
$c$ is an absolute constant while $\Delta$ and $L$ depend on
the constants $\kappa$ and $q$ of the norm-equivalence condition \eqref{eq:normequivalence}).

Without loss of generality, we may assume that the covariance matrix
$\Sigma$ is positive definite.
We may write the random vector $Z=X-X'$ as $Z=TW$ where the random
vector $W$ has identity covariance matrix and $T:\Rd\to\Rd$ is a positive
definite linear transformation.
Since $Z/\sqrt{2}$ has the same
covariance matrix $\Sigma$ as $X$, the eigenvalues of $T$ are $2\lambda_1, \ldots,2\lambda_d$.

It is straightforward to verify that $D=\{u\in \Rd: \sigma(u) \le 1\}
=T^{-1} B_2^d$ and therefore the packing numbers of Definition \ref{def:criticallevel} satisfy
\[
{\cal M}(B_2^d \cap r D, \Delta^{3/2} r /(1000 \cdot L))
={\cal M}\left(TB_2^d \cap r B_2^d, \Delta^{3/2} r /(1000\cdot L) \right)~.
\]
Also, $TB_2^d \cap r B_2^d \subset {\cal E}$ for an ellipsoid ${\cal
  E}$ whose principal axes are of lengths proportional to
the values $\min\{\sqrt{\lambda_i}, r\}$. By Sudakov's inequality (see, e.g. \cite{LeTa91}), there is a
constant $c_3$ (depending on $L$ only) such that
\[
c_3 \Delta^{3/2} r \log^{\frac{1}{2}} {\cal M} \left(TB_2^d \cap r
  B_2^d, \Delta^{3/2}/ (1000\cdot L) \right) \leq \E \sup_{x \in {\cal E}} \inr{G,x}~,
\]
where $G$ is the standard Gaussian random vector in $\R^d$. A straightforward computation shows that
\[
\E \sup_{x \in {\cal E}} \inr{G,x} \leq c_4\left(\sum_{i=1}^d \min\left\{\lambda_i,r^2\right\}\right)^{\frac{1}{2}}
\]
for a numerical constant $c_4>0$
and, in particular, inequality (1) in Definition \ref{def:criticallevel} holds if
\begin{equation} \label{eq:cond-on-r-3}
\left(\sum_{i=1}^d \min\left\{\lambda_i,r^2\right\}\right)^{\frac{1}{2}} \leq c_5 \Delta^2 \sqrt{n} r~,
\end{equation}
where the constant $c_5$ depends on $L$.

Turning to inequality (2) in Definition \ref{def:criticallevel}, observe that
\begin{eqnarray*}
\E \sup_{u \in 2B_2^d \cap \Delta^{3/2}r D} \left|\sum_{i=1}^n \eps_i
  \inr{Z_i,u}\right|
& \leq & 2 \E \sup_{u \in T^{-1} (T B_2^d \cap \Delta^{3/2}r B_2^d)} \left|\sum_{i=1}^n \eps_i \inr{W_i,T u}\right|
\\
& = & 2 \E \sup_{v \in T B_2^d \cap \Delta^{3/2}r B_2^d}
      \left|\sum_{i=1}^n \eps_i \inr{W_i,v}\right| \\
& \le & c_6 \E \sup_{v \in {\cal E}'} \left|\sum_{i=1}^n \eps_i \inr{W_i,v}\right|
\end{eqnarray*}
for an ellipsoid ${\cal E}'$ that may be written as ${\cal E}'= Q
B_2^d$ for a linear transformation $Q$ whose eigenvalues are
proportional to $\min\{\sqrt{\lambda_i}, \Delta^{3/2} r\}$. Thus,
\[
\E \sup_{v \in {\cal E}'} \left|\sum_{i=1}^n \eps_i \inr{W_i,v}\right| = \E \sup_{v \in B_2^d} \left|\sum_{i=1}^n \eps_i \inr{QW_i,v}\right| \leq \sqrt{n} \left(\E \|QW\|_2^2\right)^{1/2}~.
\]
Since $W$ is isotropic,
\[
\E \|Q W\|_2^2 = \sum_{i=1}^d \E \inr{W,Q^* e_i}^2 = \sum_{i=1}^d
\|Q^*e_i\|_2^2 \le c_7 \sum_{i=1}^d \min\{\lambda_i, \Delta^{3} r^2\}~.
\]
Therefore, inequality (2) in Definition \ref{def:criticallevel} is verified once
\begin{equation} \label{eq:cond-on-r-4}
\left(\sum_{i=1}^d \min\{\lambda_i, \Delta^{3} r^2\}\right)^{\frac{1}{2}} \leq c_8 \Delta^2 \sqrt{n} r
\end{equation}
for a constant $c_8$ (that depends on $\kappa$ and $q$).
Recalling that $\Delta <1$, it is evident that both \eqref{eq:cond-on-r-3} and \eqref{eq:cond-on-r-4}
are satisfied when
\begin{equation} \label{eq:cond-on-r-5}
\left(\sum_{i=1}^d \min\{\lambda_i, r^2\}\right)^{\frac{1}{2}} \leq c_9 \sqrt{n} r
\end{equation}
for a constant $c_9=c_9(\kappa,q)$.
Using that $\min\{\lambda_i, r^2\} \leq r^2$ for $i \leq \frac{1}{2}c_9$, it suffices that
\[
r^2 \geq c_0 \frac{1}{n} \sum_{i \geq c_0 n} \lambda_i~,
\]
for a constant depending on $\kappa$ and $q$ only, as claimed.

\subsubsection*{Below the critical level}

Finally, we prove part $(ii)$ of Proposition \ref{thm:iso-variance}.
Let the parameters $\theta,\gamma \in (0,1)$ be constant as specified
in the proof of part $(i)$ above.

Fix $r_0>0$ that satisfies \eqref{eq:cond-on-r-5} and set
$\F_{r_0} = \{ \inr{u,\cdot} : u \in B_2^d \cap r_0 D\}$.
Our goal is to show that, with high probability, for every $u \in
B_2^d \cap r_0 D$,
\[
\left(\inr{u,Z}_{\theta n})\right)^{\sharp} \leq c_0 \frac{r_0}{\sqrt{\theta}}
\]
for a constant $c_0$,
where $\left(\inr{u,Z}_j)^\sharp\right)_{j=1}^n$ is the monotone nonincreasing
rearrangement of $\inr{u,Z_1},\ldots, \inr{u,Z_n}$.
On this event, for all $u \in B_2^d \cap r_0 D$, we have
\[
\psi_N(u) \leq \frac{c_0^2}{\theta} r_0^2~,
\]
as required.

The proof uses a net argument.
Let $U_{r_0}$ be a maximal $\sqrt{\theta}r_0$-separated subset of
$B_2^d \cap r_0 D$. A standard argument using the norm equivalence
condition, the union bound, and Markov's inequality shows that, for every $u \in U_{r_0}$, with probability at least $1-\exp(-c_1 k \log(n/(c_2k)))$,
\[
\left(\inr{u,Z}\right)^{\sharp}_{k}\leq  \|\inr{u,Z}\|_{L_2} \sqrt{\frac{n}{k}}~,
\]
where $c_1=q/2-1$ and $c_2=(e\kappa^q)^{2/(q-2)}$.
Hence, by setting $k=\theta n/2$ and if
\[
|U_{r_0}| \leq (c_1/4) n \theta \log\left(\frac{2c_2}{\theta}\right)~,
\]
that is, if
\begin{equation} \label{eq:cond-on-rho-2}
\log{\cal M}(B_2^d \cap r_0 D, \sqrt{\theta} r_0) \leq (c_1/4) n \theta \log\left(\frac{2c_2}{\theta}\right)~,
\end{equation}
it follows that, with probability at least $1- \exp\left(-(c_1/4) n \theta \log\left(\frac{2c_2}{\theta}\right)\right)$,
for every $u \in U_{r_0}$,
\[
\left(\inr{u,Z}\right)^{\sharp}_{\theta n/2} \leq  \frac{r_0}{\sqrt{\theta/2}}~.
\]
Denote by $\pi u$ the best approximation to $u$ in $U_{r_0}$ with
respect to the $L_2(Z)$ norm.
In particular, by the choice of $U_{r_0}$, $\|u-\pi u\|_{L_2} \leq
\sqrt{\theta}r_0$. To complete the proof it suffices to show that,
with high probability,
\[
\Gamma \defeq \sup_{v \in B_2^d \cap r_0 D} \left| \left\{ j : \left|
      \inr{u-\pi u,Z_j} \right| \geq  \frac{8 r_0}{\sqrt{\theta}} \right\} \right| \leq \frac{\theta n}{2}~.
\]
 This follows from what is, by now, a standard
argument:

By the bounded differences inequality we have that, with probability at least $1-2\exp(- \theta^2 n/8)$, $\Gamma \leq \frac{\theta n}{2}$, provided that
\[
\E \Gamma \leq \frac{\theta n}{4}~.
\]
By symmetrization and contraction,
\begin{eqnarray*}
\E \Gamma & \leq & \frac{\sqrt{\theta}}{8 r_0} \E \sup_{u \in B_2^d \cap r_0 D} \sum_{j=1}^n |\inr{u-\pi u,Z_i}|
\\
& \leq & \frac{\sqrt{\theta}}{8 r_0} \left( \E \sup_{u \in B_2^d \cap r_0 D} \sum_{j=1}^n \left(|\inr{u-\pi u,Z_i}|-\E|\inr{u-\pi u,Z_i}| \right) + \sqrt{\theta} r_0 n \right)
\\
& \leq & \frac{\sqrt{\theta}}{8 r_0} \left(2 \E \sup_{u \in B_2^d \cap r_0 D} \left| \sum_{j=1}^n \eps_j \inr{u-\pi u,Z_i}\right| + \sqrt{\theta} r_0 n\right)
\\
& \leq & \frac{\sqrt{\theta}}{8 r_0} \left(4\E \sup_{u \in B_2^d \cap \sqrt{\theta}r_0 D}\left| \sum_{j=1}^n \eps_j \inr{u,Z_i}\right| + \sqrt{\theta} r_0 n\right)~.
\end{eqnarray*}
Hence, $\E \Gamma \le \theta n/4$ provided that
\[
\E \sup_{u \in B_2^d \cap \sqrt{\theta}r_0 D}\left| \sum_{j=1}^n \eps_j \inr{u,Z_i}\right|
 \leq \frac{r_0\sqrt{\theta} n}{4}~.
\]
This may be proved by the same argument used in the second part of the
proof above. In particular, the inequality holds once $r_0 \ge
\sqrt{(c_0/n)\sum_{i\ge c_0 n} \lambda_i}$ for an appropriate
constant (depending on $\kappa$ and $q$), as required.
\end{proof}

\section{Multivariate mean estimator and its performance}
\label{sec:meanest}

Now we are prepared to define the mean estimator announced in Theorem
\ref{thm:meanest} and prove its performance bound. The estimator
receives, as input, $3N$ independent, identically distributed random
vectors $X_1,\ldots,X_{3N}$, the parameters $\kappa>0$ and $q>2$, and
the confidence parameter $\delta \in (0,1)$. (The sample size is set
to be $3N$ for convenience as the proposed estimator splits the data
into three equal parts.)

The data $X_{N+1},\ldots,X_{3N}$ are used to estimate the variances
$\sigma^2(u)= \var\left(\inr{X,u}\right)$ for $u\in S^{d-1}$. Using
the estimator $\psi_N$, we have that, on an event
${\cal A}$ of probability at least $1-e^{-cN}$,
\begin{equation}
\label{eq:goodevent}
\begin{array}{ll}
\frac{1}{4}\sigma^2(u) \leq \psi_N(u) \leq   2 \sigma ^2(u) &
                                                              \text{for all $u \in S^{d-1}$ such that $\sigma(u) \geq r$} \\
\psi_N(u) \leq Cr^2 & \text{otherwise}
\end{array}
\end{equation}
Here $c,C$ are constants
depending on $\kappa$ and $q$ only and
\[
    r = \sqrt{\frac{c_0}{N}\sum_{i\ge c_0N} \lambda_i}
\]
for another constant $c_0>0$ depending on $\kappa$ and $q$.
(Recall that the variance estimator $\psi_N$ has two parameters
$\theta$ and $\gamma$, both may be determined by $\kappa$ and $q$.)

The data $X_1,\ldots,X_N$ are used to estimate the mean $\EXP\inr{X,u}
= \inr{\mu,u}$ for all $u\in S^{d-1}$. One would like to construct
an estimator such that it is approximately correct
simultaneously for all directions $u\in S^{d-1}$. To this end,
similarly to the covariance estimation procedure of the previous
section, we  divide the sample $X_1,\ldots,X_N$ into $n = N/m$ blocks,
each of size $m$, and compute, for $j\in [n]$,
\[
    Y_j = \frac{1}{\sqrt{m}}\sum_{i=1}^{m} X_{m(j-1) + i}~.
\]
The recommended value of $m$ is specified below. It is not necessarily
the same as the block size in the covariance estimation
procedure defined in Section \ref{sec:variance}.
However, the role of this blocking procedure is the
same as in the covariance estimation procedure of
Section \ref{sec:variance}: by an appropriate choice of $m$, the random vectors
$Y_j$ satisfy the small-ball condition that allows us to apply
Theorem \ref{thm:uniform-properties}. The estimator is a simple
trimmed-mean estimator defined as
\[
    \wh{\nu}_N(u)= \frac{1}{\sqrt{m}} \frac{1}{n-2\theta n} \sum_{j\in [n] \setminus
      J_+(u) \cup J_-(u)} Y_j~,
\]
where $\theta \in (0,1/2)$ is a parameter of the estimator and the sets
$J_+(u)$ and $J_-(u)$ correspond to the indices of the $\theta n$
smallest and $\theta n$ largest values of $\inr{Y_j,u}$. (We may
assume that $\theta n$ is an integer and the value of $\theta$ is
specified below.) The key property of the marginal mean estimator
$\wh{\nu}_N(u)$ is summarized in the next proposition.

\begin{proposition}
\label{prop:marginal}
Assume the condition of Theorem \ref{thm:meanest}.
There exist choices of the parameters $m$ and $\theta$ of the
estimator $\wh{\nu}_N(u)$ that depend only on $\kappa$ and $q$ and
there exist constants $c,C'>0$ depending on $\kappa,q$ such that,
with probability at least $1-\delta$, for all $u\in S^{d-1}$,
\begin{equation}
\label{eq:goodeventmarginal}
      \left| \wh{\nu}_N(u)- \inr{\mu,u} \right| \le C'\left( \sqrt{\frac{\sigma^2(u)\log(1/\delta) }{N}} + \sqrt{\frac{\sum_{i=c\log(1/\delta)}^d \lambda_i}{N}} \right)~.
\end{equation}
\end{proposition}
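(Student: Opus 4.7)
The plan is to recognize $\wh{\nu}_N(u)$ as the trimmed mean of the block averages $\bar X_j = Y_j/\sqrt{m}$ and to analyze it via the machinery of Sections \ref{sec:tailintegration}--\ref{sec:uniform}. Writing $Z_j(u) = \inr{\bar X_j - \mu, u}$, the trimming indices $J_{\pm}(u)$ are invariant under a constant shift, so $\wh{\nu}_N(u) - \inr{\mu,u}$ is exactly the $\Phi_\theta$-trimmed mean of the centered scalars $Z_j(u)$. By Lemma \ref{lem:normequivalence}, choosing $m \ge c_1(\kappa,q)/\gamma^2$ ensures that every function in the star-shaped, centered class $\F = \{Z(u) : u \in B_2^d\}$ satisfies the small-ball condition of Theorem \ref{thm:uniform-properties} with constants $L$ and $\gamma$, and \eqref{eq:fact1} of the same lemma supplies property $(3)$ of Definition \ref{def:ratioconditions} for each $Z(u)$.

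The core of the argument is then to apply Theorem \ref{thm:uniform-properties} to $\F$ with $\Delta$ of order $\log(1/\delta)/n$ (so that Corollary \ref{thm:main-single-integral} will produce the announced weak term) and $\gamma = c_1 \Delta$. This yields, with probability at least $1-\delta$, an event ${\cal A}$ on which properties $(1)$ and $(2)$ of Definition \ref{def:ratioconditions} hold simultaneously for every $Z(u)$ whose $L_2$ norm exceeds the critical level $\ol\rho_n(c,\Delta,L)$. For each $u \in S^{d-1}$ with $\sigma(u)/\sqrt{m} \ge \ol\rho_n(c,\Delta,L)$, the trimming indices $J_\pm(u)$ used in the definition of $\wh{\nu}_N$ coincide with those appearing in Theorem \ref{thm:main-integral} applied to $Z_j(u)$, so Corollary \ref{thm:main-single-integral} with $p=1$ directly yields
\[
\left|\wh{\nu}_N(u) - \inr{\mu,u}\right| \;\le\; C \sqrt{\Delta}\,\|Z(u)\|_{L_2} \;\asymp\; \sigma(u)\sqrt{\frac{\log(1/\delta)}{N}}~,
\]
which is the weak term of \eqref{eq:goodeventmarginal}.

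What remains is to (i) evaluate $\ol\rho_n(c,\Delta,L)$ explicitly so that the condition $\sigma(u)/\sqrt{m} \ge \ol\rho_n$ is equivalent to $\sigma^2(u) \gtrsim (1/N)\sum_{i \ge c\log(1/\delta)}\lambda_i$, and (ii) dispose of the directions $u$ that fall below this critical level. Part (i) replays the Sudakov/ellipsoid computation carried out in the covariance estimator (Proposition \ref{thm:iso-variance}), but now with $\Delta$ no longer constant; tracking the powers of $\Delta$ through the entropy bound in condition $(1)$ and the Rademacher bound in condition $(2)$ of Definition \ref{def:criticallevel} should produce $\sqrt{m}\,\ol\rho_n \asymp \sqrt{(1/N)\sum_{i \ge c\log(1/\delta)}\lambda_i}$, exactly the announced strong term. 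Part (ii) mirrors the ``below critical level'' analysis at the end of Proposition \ref{thm:iso-variance}: a $\sqrt{\theta}\,\ol\rho_n$-net on $B_2^d \cap \ol\rho_n D$, the $L_q$--$L_2$ norm equivalence, and a bounded-differences concentration estimate together show that, uniformly in such $u$, the $(\theta n)$-th largest value of $|Z_j(u)|$ is at most a constant multiple of $\ol\rho_n$, whence $|\wh{\nu}_N(u) - \inr{\mu,u}| \le C \ol\rho_n$, which is absorbed into the strong term.

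The main obstacle I foresee is the critical-level computation of step (i). With $\Delta \asymp \log(1/\delta)/n$ instead of constant, the budgets on the right-hand sides of conditions $(1)$ and $(2)$ of Definition \ref{def:criticallevel} are considerably tighter than in the covariance setting, and the small-ball requirement $\gamma = c_1 \Delta$ couples the block size $m$ to $n$ and $\log(1/\delta)$ via $m \ge c/\Delta^2$. Threading these coupled constraints through the ellipsoid computation while preserving $nm = N$, and verifying that they produce exactly $c\log(1/\delta)$ as the eigenvalue-truncation index (rather than some other power of $\log(1/\delta)$ or $n$), is the delicate step of the argument.
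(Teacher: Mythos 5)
Your outline is structurally right: recognize $\wh{\nu}_N(u)-\inr{\mu,u}$ as the $\Phi_\theta$-trimmed mean of the block averages, invoke Lemma \ref{lem:normequivalence} for the small-ball condition, apply Theorem \ref{thm:uniform-properties} to obtain properties $(1)$--$(2)$ of Definition \ref{def:ratioconditions} uniformly, use Corollary \ref{thm:main-single-integral} above the critical level, and run a net argument below it. That is exactly the paper's proof skeleton. But the parametrization you propose--$\Delta$ of order $\log(1/\delta)/n$ with $n$ left free--is not the one that makes the critical-level computation close, and the obstacle you flag at the end is real and not easily threaded in the way you suggest.

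Here is the issue concretely. The probability guarantee of Theorem \ref{thm:uniform-properties} forces $\Delta n \gtrsim \log(1/\delta)$, and the weak-term budget from Corollary \ref{thm:main-single-integral} (for $p=1$ and using the $L_q$--$L_2$ equivalence) gives error $\asymp \sqrt{\Delta}\,\sigma(u)/\sqrt{m} = \sigma(u)\sqrt{\Delta n/N}$, which is optimal precisely at $\Delta n \asymp \log(1/\delta)$. So far your proposal and the paper agree. But the entropy condition in Definition \ref{def:criticallevel}, after the Sudakov/ellipsoid computation, reads $\sum_i \min\{\lambda_i,\rho^2\} \lesssim n\Delta^4 \rho^2$ in terms of the effective radius $\rho = \ol{\rho}_n \sqrt{m}$. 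With $\Delta n$ pinned at $\log(1/\delta)$, we have $n\Delta^4 = (\Delta n)\Delta^3 = \log(1/\delta)\cdot\Delta^3$, which \emph{shrinks} as $\Delta\to 0$. So shrinking $\Delta$ (equivalently, taking $n\gg\log(1/\delta)$) blows up the critical level--it does not help. The sweet spot is $\Delta$ a \emph{constant} (depending only on $\kappa,q$), forcing $n\asymp\log(1/\delta)$ and therefore $m\asymp N/\log(1/\delta)$. That is precisely what the paper does: fix $\Delta=\gamma/c_1$ as a constant, take $m = c(\kappa,q)N/\log(1/\delta)$ so that $n\asymp\log(1/\delta)$, obtain probability $1-2e^{-c_2\Delta n}\geq 1-\delta/2$, and then the critical-level computation is \emph{identical} to the one already done for Proposition \ref{thm:iso-variance} (with $n$ playing the role there of the sample size), yielding $\rho^2 \asymp (1/n)\sum_{i\geq cn}\lambda_i$ with $n\asymp\log(1/\delta)$, i.e.\ exactly the $c\log(1/\delta)$ truncation index. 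The $\log(1/\delta)$ in the weak term then comes from the block size $m$ (through $\sigma(u)/\sqrt{m}$) rather than from $\Delta$. In short: you have the right machinery and correctly identify the delicate step, but the missing idea is that the $\log(1/\delta)$ factor is injected via the block size $m$, not by letting $\Delta$ shrink.
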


The proof of the proposition follows from Theorems
\ref{thm:main-integral} and \ref{thm:uniform-properties}, similarly to
the arguments presented in the previous section for covariance
estimation. In order to avoid repetitions, we defer the details to
Section \ref{sec:proofmarginal} in the Appendix.

Equipped with Proposition \ref{prop:marginal}, it is now easy to
define the mean estimator announced in the introduction and prove
Theorem \ref{thm:meanest}.

Let $\psi_N(u)$ and $\wh{\nu}_N(u)$ be the variance and marginal
mean estimators defined above.
For a parameter $\rho>0$, and for each $u\in S^{d-1}$, define the
slabs
\[
   E_{u,\rho} = \left\{ v\in \Rd: \left| \wh{\nu}_N(u) - \inr{v,u}
     \right|  \le \rho + 2 C' \sqrt{\frac{\psi_N(u) \log(1/\delta)}{N}} \right\}
\]
and let
\[
S_{\rho} = \bigcap_{u\in S^{d-1}} E_{u,\rho}~.
\]
Note that for every $\rho>0$, the set $S_{\rho}$ is a compact set and
that $S_{\rho}$ is nonempty for a sufficiently large
$\rho$. Therefore, the set
\[
    S = \bigcap_{\rho>0: S_{\rho} \neq \emptyset} S_{\rho}
\]
is not empty. We define the mean estimator as any element $\mu_N \in S$.

Theorem \ref{thm:meanest} now follows easily.

\medskip
\noindent
{\bf Proof of Theorem \ref{thm:meanest}.}
By Propositions \ref{thm:iso-variance} and \ref{prop:marginal}, with
probability at least $1-\delta$, both \eqref{eq:goodevent} and
\eqref{eq:goodeventmarginal} hold,
where $\delta \in (0,1)$ is such that $c\log(1/\delta) \le c_0N$.
Denote this event by ${\cal A}$.
(Here $c_0$ is the constant appearing in the definition of $r$ and $c$
is as in \eqref{eq:goodeventmarginal}.)

On the event ${\cal A}$, if $u\in S^{d-1}$ is such that $\sigma(u) \ge
r$, then $\sigma^2(u) \le 4\psi_N(u)$ and therefore,
\eqref{eq:goodeventmarginal} implies that
\[
 \left| \wh{\nu}_N(u)- \inr{\mu,u} \right| \le\rho + 2 C' \sqrt{\frac{\psi_N(u) \log(1/\delta)}{N}}
\]
whenever $\rho \ge C'\sqrt{\frac{\sum_{i=c\log(1/\delta)}^d \lambda_i}{N}}$.
On the other hand, if $\sigma(u) \ge r$, then $\psi_N(u) \le Cr^2$,
and therefore, by bounding the right-hand side of
\eqref{eq:goodeventmarginal} further, we get
\begin{eqnarray*}
   \left| \wh{\nu}_N(u)- \inr{\mu,u} \right|
& \le & C'\left(
     \sqrt{\frac{C r^2\log(1/\delta) }{N}} +
        \sqrt{\frac{\sum_{i=c\log(1/\delta)}^d \lambda_i}{N}}\right)
  \\
& \le & C'\sqrt{\frac{Cc_0}{c}} r +
       C' \sqrt{\frac{\sum_{i=c\log(1/\delta)}^d \lambda_i}{N}}
\\
& & \text{(since $c\log(1/\delta) \le c_0N$)}  \\
& \le & C'\left(c_0\sqrt{\frac{C}{c}} +1\right)\sqrt{\frac{\sum_{i=c\log(1/\delta)}^d \lambda_i}{N}}~,
\end{eqnarray*}
where at the last step again we used the fact that $c\log(1/\delta) \le
c_0N$.

Hence, on the event ${\cal A}$, we have that $\mu \in S_{\rho}$
when
\[
     \rho = C_1 \sqrt{\frac{\sum_{i=c\log(1/\delta)}^d \lambda_i}{N}}~,
\]
where $C_1 = C'\left(c_0\sqrt{\frac{C}{c}} +1\right)$. Thus,
$S_{\rho}$ is nonempty and, by definition, $\wh{\mu}_N \in S_{\rho}$
for this choice of $\rho$. This means that, for all $u\in S_{\rho}$,
\[
   \left| \inr{\wh{\mu}-\mu,u} \right| \le C_1
   \sqrt{\frac{\sum_{i=c\log(1/\delta)}^d \lambda_i}{N}}
 + 2 C' \sqrt{\frac{\psi_N(u) \log(1/\delta)}{N}}~.
\]
The theorem is now proved by noticing that $\psi_N(u) \le 2\sigma^2(u)$ when $\sigma(u) \ge r$ and
$\psi_N(u) \le Cr^2$ otherwise.
\endproof

\appendix
\section{Appendix: additional proofs}

\subsection{Proof of Proposition \ref{prop:empmean}}
\label{sec:proofofempmean}

We may assume, without loss of generality, that $\Sigma$ is diagonal such that the eigenvalues
$\lambda_1 \ge \lambda_2 \ge \cdots \ge \lambda_d$ have the canonical basis vectors $e_1,\ldots,e_d$
as corresponding eigenvectors. Denote by $Y= \sqrt{N} \left(\wt{\mu}_N - \mu\right)$
and note that $Y$ is a zero-mean Gaussian vector with covariance
matrix $\Sigma$.

Actually, we prove the lower bound
\begin{equation}
\label{eq:twoterms}
   S \ge C' \left(\sqrt{\frac{\sum_{i>k_0}\lambda_i}{N}}
 +     \sqrt{\frac{\lambda_{k_0+1}\log(1/\delta)}{N}}\right)~,
\end{equation}
which is seemingly stronger than the announced inequality. However,
note that the second term in the expression of the lower bound of the
strong term above satisfies
\[
   \sqrt{\frac{\lambda_{k_0+1}\log(1/\delta)}{N}} \sim
   \sqrt{\frac{k_0 \lambda_{k_0+1}}{N}}
\le
   \sqrt{\frac{2\sum_{i>k_0/2} \lambda_i}{N}}~.
\]
Hence, we do not lose much by ignoring the second term.

Suppose that (\ref{eq:ass}) holds on an event $\Omega_\delta$ such
that $\PROB\{\Omega_{\delta}\}\ge 1-\delta$. Let $k$ be the unique value such
that $S \in (C\sqrt{\lambda_{k+1}\log(1/\delta)/N}, C\sqrt{\lambda_k\log(1/\delta)/N}]$.

Denote by $U_k\subset \Rd$ the vector space
spanned by $e_1,\ldots,e_k$.
For all $u\in U_k \cap S^{d-1}$ we have $\sigma^2(u) \ge \lambda_k$, and for all
such vectors $S \le C \sigma(u)\sqrt{\log(1/\delta)/N}$. Therefore,
on the event $\Omega_{\delta}$,
\[
\forall u \in U_k \cap S^{d-1}: \ \inr{\wt{\mu}_N - \mu, u} \le  2 C \sqrt{\frac{\sigma^2(u)\log(1/\delta) }{N}}~.
\]
Equivalently,
\[
     \sup_{u\in U_k \cap S^{d-1}} \frac{\inr{Y,u}}{\sigma(u)}\le 2C \log(1/\delta)~.
\]
If $G=(G_1,\ldots,G_d)$ is a standard normal vector in $\Rd$, then we
may write $Y=\Sigma^{1/2}G$. Since $\sigma(u)= \|\Sigma^{1/2}u\|$, and
$\Sigma$ is a diagonal matrix,
\[
   \sup_{u\in U_k \cap S^{d-1}} \frac{\inr{Y,u}}{\sigma(u)}=
   \sup_{v\in U_k \cap S^{d-1}} \inr{G,v} = \|G^{(k)}\|~,
\]
where $G^{(k)}= (G_1,\ldots,G_k)$ is a standard normal vector in
$\R^k$. By the Gaussian concentration inequality, with probability at
least $1-\delta$, we have
\[
   \|G^{(k)}\| \ge \EXP \|G^{(k)}\| - \sqrt{2\log(1/\delta)}\ge
   \sqrt{k-1} - \sqrt{2\log(1/\delta)}~.
\]
Comparing the upper and lower bounds for $\|G^{(k)}\|$, we conclude
that
\[
    k \le 1+ (2C+\sqrt{2})^2 \log(1/\delta) = k_0~,
\]
implying that
\begin{equation}
\label{eq:firstboundforS}
   S \ge C\sqrt{\frac{\lambda_{k_0+1}\log(1/\delta)}{N}}~.
\end{equation}
Next we consider the orthogonal complement $U_k^{\perp}$ of $U_k$.
Since $\sigma^2(u) \le \lambda_{k+1}$ for all $u\in U_k^{\perp} \cap
S^{d-1}$, on the event $\Omega_{\delta}$, we have
\[
\sup_{u \in U_k^{\perp} \cap S^{d-1}} \inr{Y,u}\le  2S\sqrt{N}~.
\]
Writing $Y=\Sigma^{1/2}G$ as before, and noting that
$\sup_{u \in U_k^{\perp} \cap S^{d-1}} \inr{\Sigma^{1/2}G,u}$ is a
Lipschitz function of $G$ with constant $\sqrt{\lambda_{k+1}}$, the
Gaussian concentration inequality implies that, with probability at
least $1-\delta$,
\[
    \sup_{u \in U_k^{\perp} \cap S^{d-1}} \inr{Y,u} \ge \EXP \sup_{u
      \in U_k^{\perp} \cap S^{d-1}} \inr{Y,u}-
    \sqrt{2\lambda_{k+1}\log(1/\delta)}
   = \EXP \|\Sigma^{1/2} G^{(k)\perp}\| -
    \sqrt{2\lambda_{k+1}\log(1/\delta)}~,
\]
where $G^{(k)\perp}= (G_{k+1},\ldots,G_d)$.
By the Gaussian Poincar\'e inequality (see., e.g., \cite[Theorem 3.20]{BoLuMa13}),
\[
\EXP \|\Sigma^{1/2} G^{(k)\perp}\| \ge \sqrt{\EXP \|\Sigma^{1/2}
  G^{(k)\perp}\|^2} - \sqrt{\lambda_{k+1}}
  = \sqrt{\sum_{i>k} \lambda_i} - \sqrt{\lambda_{k+1}}~,
\]
and therefore
\[
   S \ge \frac{1}{2\sqrt{N}} \left( \sqrt{\sum_{i>k} \lambda_i} -
     \sqrt{\lambda_{k+1}} \left(1+\sqrt{2\log(1/\delta)}\right)\right)~.
\]
If
\[
    \sqrt{\lambda_{k+1}} \left(1+\sqrt{2\log(1/\delta)}\right) \le
    \frac{1}{2} \sqrt{\sum_{i>k} \lambda_i}~,
\]
then
\[
   S \ge \frac{1}{4\sqrt{N}} \left( \sqrt{\sum_{i>k} \lambda_i}\right)
\]
which, together with (\ref{eq:firstboundforS}) and the fact that $k\le
k_0$, implies (\ref{eq:twoterms}). On the other hand, if
\[
    \sqrt{\lambda_{k+1}} \left(1+\sqrt{2\log(1/\delta)}\right) >
    \frac{1}{2} \sqrt{\sum_{i>k} \lambda_i}~,
\]
then (\ref{eq:firstboundforS}) already implies inequality (\ref{eq:twoterms}).

\subsection{Proof of Proposition \ref{prop:marginal}}
\label{sec:proofmarginal}

For $j\in [n]$, we write $\ol{Y}_j=Y_j-\E Y_j = Y_j-\mu$.
Then for all $u\in S^{d-1}$,
\[
\left|\wh{\nu}(u) - \inr{\mu,u} \right| = \frac{1}{\sqrt{m}}
\left(\frac{1}{n-2n\theta} \sum_{j \in [n]\setminus (J_+\cup J_-)} \inr{\ol{Y}_j,u} \right)~,
\]
and it suffices to obtain an upper estimate on
\[
\left| \frac{1}{n-2n\theta} \sum_{j \in [n] \backslash (J_+ \cup J_-)} \inr{\ol{Y}_j,u} \right|~.
\]
That is precisely the question addressed in Theorem
\ref{thm:main-integral} for $p=1$ (and with the sample size being $n$
rather than $N$). To apply Theorem \ref{thm:main-integral} and the
subsequent Corollary \ref{thm:main-single-integral}, one needs to
ensure that the random variables $\inr{\ol{Y},u}$ satisfy properties
$(1)$-$(3)$ of Definition \ref{def:ratioconditions}.

Observe that $\left\|\inr{\ol{Y,u}}\right\|_{L_2} = \left\|u\right\|_{L_2}=\sigma(u)$
because the $L_2$ norm endowed by $\ol{Y}$ coincides with the one
endowed by $\ol{X}$.  \eqref{eq:fact1} in Lemma
\ref{lem:normequivalence} shows that property $(3)$ holds
for $\eta=1/4$ if $m \geq m_0(q,\kappa)$ for a constant $m_0(q,\kappa)$.

Also, by \eqref{eq:fact4}, for any $\gamma\in (0,1)$, if $m \geq
c_1/\gamma^2$ for some constant $c_1$, then for any
interval $I \subset \R$,
\[
\PROB\left\{ \ol{Y} \in I \right\} \leq \max\left\{L \frac{|I|}{\sigma(u)}, \gamma\right\}~,
\]
and therefore, with such a choice of $m$, the class
$\F= \left\{\inr{\ol{Y},u}: u \in B_2^d \right\}$ satisfies the
assumptions of Theorem \ref{thm:uniform-properties}.

Now set $\Delta = \gamma/c_1$ (with $c_1$ as in the statement of
Theorem \ref{thm:uniform-properties}) and choose $\theta \ge 7\Delta$.
Let $\rho_1 \ge \ol{\rho}_n(c,\Delta,L)$ where $\ol{\rho}_n(c,\Delta,L)$ is the
critical level of the class $\F$ (with $c$ as in Theorem
\ref{thm:uniform-properties}).

The theorem implies that there is an event ${\cal A}$ with
probability at least $1-2\exp(-c_2\Delta n)$, such that for all
$\|u\|_{L_2} \geq \rho_1$, the random variable $\inr{v,\ol{Y}}$
satisfies properties $(1)$ and $(2)$ of Definition
\ref{def:ratioconditions}.
Therefore, Theorem \ref{thm:main-single-integral} shows that on that event, if
$\|u\|_{L_2} \geq \rho_1$, then
\[
\left| \frac{1}{n-2n\theta} \sum_{j \in [n] \setminus (J_+ \cup J_-)} \inr{\ol{Y}_j,u} \right| \leq c_3 \sqrt{\Delta\log(1/\Delta)} \sigma(u) \leq c_4 \sigma(u)~.
\]
In particular, there is a constant $c(\kappa,q)$ such that, if
\[
m =c(\kappa,q)\frac{N}{\log(1/\delta)}~,
\]
then $\PROB\{{\cal A}\} \geq 1-\delta/2$ and on the event ${\cal A}$, for any $u \in S^{d-1}$ for which $\sigma(u) \geq \rho_1$,
\begin{equation} \label{eq:mean-est-2}
\left|\wh{\nu}_N(u) - \inr{\mu,u} \right| \leq c'(\kappa,q) \sigma(u) \sqrt{\frac{\log(e/\delta)}{N}}~,
\end{equation}
satisfying \eqref{eq:goodeventmarginal}.

It remains to check that the inequality also holds for those $u\in
S^{d-1}$ with $\sigma(u) < \rho_1$.
Let $\rho_2 \geq \rho_1$ to be specified in what follows.
 Clearly, \eqref{eq:mean-est-2} holds when $\sigma(u) \geq
 \rho_2$. When $\sigma(u) < \rho_2$,
one may repeat the argument used in the last part of the proof of
Theorem \ref{thm:iso-variance} (``below the critical level''). It is evident that if
\[
\rho_2 = c(\kappa,q)\sqrt{\frac{\sum_{i \geq c'(\kappa,q)n}  \lambda_i}{n}}~,
\]
for some constants $c(\kappa,q), c'(\kappa,q)$,
then, with probability $1-2\exp(-c_1 n) \ge 1-\delta/2$, %
\[
\sup_{u \in B_2^d \cap \rho_2 D} \left|\inr{\ol{Y}_j,u}\right|^{\sharp}_{\theta
  n} \le c(\kappa,q)\rho_2~.
\]
Therefore, on this event,
\[
\sup_{v \in B_2^d \cap \rho D} \left| \frac{1}{n-2n\theta} \sum_{j \in [n] \setminus (J_+ \cup J_-)} \inr{\ol{Y}_j,u} \right| \leq c^\prime(\kappa,q)\rho_2~,
\]
and
\[
\left|\wh{\nu}_N(u) - \inr{\mu,u} \right| \leq \frac{1}{\sqrt{m}} \cdot c^{\prime \prime} (\kappa,q)\rho_2~.
\]
The announced bound  now follows for all $u$, on an event of
probability at least $1-\delta$.
\endproof

\section{The connection with strong-weak norm inequalities} \label{sec:strong-weak}

Strong-weak norm inequalities are a natural way of quantifying the
tail behaviour of random vectors. Given a random vector $X$ in $\R^d$
and a norm $\| \cdot \|$, we say that $X$ satisfies a strong-weak inequality with constant $C$ if for every $p \geq 1$,
$$
\left(\E \|X-\EXP X\|^p \right)^{1/p} \leq C \left( \E \|X- \EXP X\| + \sup_{z^* \in B^*} \left(\EXP |x^*(X- \EXP X)|^p \right)^{1/p} \right)~,
$$
where $B^*$ is the unit ball of the dual space to $(\R^d, \| \cdot
\|)$. In other words, the way $X$ concentrates around its mean with
respect to the norm $\| \cdot \|$ is governed by the $L_1$ norm of
$\|X- \EXP X\|$ and the largest $L_p$ norm of all the one-dimensional
marginals of the centred random vector $X - \EXP X$.

The fact that the $L_p$ norm of $\|X-\EXP X\|$ can be controlled by
such a combination and no information on the $L_p$ norms of higher
dimensional marginals is a rather powerful feature. The best type of a
strong-weak inequality one can hope for is a \emph{sub-Gaussian} one,
that is, if the tails of one dimensional marginals of $X-\EXP X$ decay
at least as fast as those of a Gaussian:
$$
\sup_{z^* \in B^*} \left(\E |x^*(X- \EXP X)|^p \right)^{1/p} \leq \sqrt{p} \sup_{z^* \in B^*} \sigma(z^*)~,
$$
where $\sigma(z^*)= \left(\E (z^*(X-\EXP X))^2\right)^{1/2}$ is the
variance of the one dimensional marginal defined by $z^*$. In such a
case, an equivalent ``in-probability" version of the strong-weak inequality is that for any $0 < \delta <1/2$,
\begin{equation} \label{eq:in-prob-subgaussian-sw}
\PROB \left( \|X-\EXP X\| \geq C \left( \E \|X-\EXP X\| + \sqrt{\log(1/\delta)} \sup_{z^* \in B^*} \sigma(z^*) \right)  \right) \leq \delta~.
\end{equation}
Clearly, if the one-dimensional marginals of $X-\EXP X$ do not exhibit a subgaussian tail decays, there is no hope that \eqref{eq:in-prob-subgaussian-sw} can be true, even, when $\| \cdot \|$ is the Euclidean norm in $\R^d$, which is our main focus.

When the random vector $Y_N=N^{-1}\sum_{i=1}^N X_i$ satisfies a subgaussian ``in probability" version of the strong-weak inequality, that implies that the empirical mean is an optimal mean estimation procedure. However, almost no random vectors satisfy that strong property. At the same time, \eqref{eq:subgauss} shows that by replacing the empirical mean with $\wh{\mu}_N$, every random vector satisfies a version of a subgaussian strong-weak inequality. Moreover, Theorem \ref{thm:meanest} shows that under a minimal norm equivalence condition, the weak term can be replaced by the optimal directional-dependent term.

\bibliographystyle{plain}

\end{document}